\newtheorem{problem}{Problem}
\title{Crossing and non-crossing families} %
\author{Todor {Anti\'c}}{Department of Applied Mathematics, Faculty of Mathematics and Physics, Charles University, Czech Republic}{todor@kam.mff.cuni.cz}{https://orcid.org/0009-0008-6521-7987}{supported by grant no. 23-04949X of the Czech Science Foundation (GA\v{C}R).}
\author{Martin Balko}{Department of Applied Mathematics, Faculty of Mathematics and Physics, Charles University, Czech Republic}{balko@kam.mff.cuni.cz}{https://orcid.org/0000-0001-9688-9489}{supported by grant no. 23-04949X of the Czech Science Foundation (GA\v{C}R) and by the Center for Foundations of Modern Computer Science (Charles Univ. project UNCE 24/SCI/008).}
\author{Birgit Vogtenhuber}{Institute of Algorithms and Theory, Graz University of Technology, Austria}{birgit.vogtenhuber@tugraz.at}{https://orcid.org/0000-0002-7166-4467}{}
\authorrunning{T. Anti\'c, M. Balko, and B. Vogtenhuber} 
\keywords{crossing family, non-crossing family, geometric graph} 
\begin{document}

\maketitle

\begin{abstract}
For a finite set $P$ of points in the plane in general position, a \emph{crossing family} of size $k$ in $P$ is a collection of $k$ line segments with endpoints in $P$ that are pairwise crossing.
It is a long-standing open problem to determine the largest size of a crossing family in any set of $n$ points in the plane in general position.
It is widely believed that this size should be linear in $n$.

Motivated by results from the theory of partitioning complete geometric graphs, we study a variant of this problem for point sets $P$ that do not contain a \emph{non-crossing family} of size $m$, which is a collection of 4 disjoint subsets $P_1$, $P_2$, $P_3$, and $P_4$ of $P$, each containing $m$ points of $P$, such that for every choice of 4 points $p_i \in P_i$, the set $\{p_1,p_2,p_3,p_4\}$ is such that  $p_4$ is in the interior of the triangle formed by $p_1,p_2,p_3$. We prove that, for every $m \in \mathbb{N}$, each set $P$ of $n$ points in the plane in general position contains either a crossing family of size $n/2^{O(\sqrt{\log{m}})}$ or a non-crossing family of size $m$, by this strengthening a recent breakthrough result by Pach, Rubin, and Tardos~(2021).
Our proof is constructive and we show that these families can be obtained in expected time $O(nm^{1+o(1)})$.
We also prove that a crossing family of size $\Omega(n/m)$ or a non-crossing family of size $m$ in~$P$ can be found in expected time $O(n)$.
\end{abstract}

\section{Introduction}

Let $P$ be a finite set of points in the plane.
We say that $P$ is \emph{in general position} if there is no line containing three points of $P$.
Two line segments in the plane are \emph{crossing} if their intersection is a single point in the relative interior of both of them.
If $\mathcal{F}$ is a set of $k$ line segments in the plane, each having both endpoints in~$P$, then we say that $\mathcal{F}$ is a \emph{crossing family of size $k$} in $P$ if any two line segments in $\mathcal{F}$ cross; 
see \cref{fig-example}(a) for an example.

A classic topic in discrete geometry is to find the largest size $T(n)$ of a crossing family in any set of $n$ points in the plane in general position.
The study of this problem was initiated in~1991 by Aronov, Erd\H{o}s, Goddard, Kleitman, Klugerman, Pach, and Schulman~\cite{aegkkps94}, who proved that $T(n) \in \Omega(\sqrt{n})$.
On the other hand, it is trivial to see that $T(n) \leq \lfloor n/2 \rfloor$. 
The currently best known upper bound is $T(n) \leq \lceil 8n/41 \rceil$, obtained by Aichholzer, Kyn{\v c}l, Scheucher, Vogtenhuber, and Valtr~\cite{aiKyScgVoVa22}.
There is a prevailing conjecture that $T(n) \in \Theta(n)$~\cite{aegkkps94,braMosPach05}.

\begin{problem}[\cite{braMosPach05}]\label{prob-crossingfamily}
	Is there a constant $c > 0$ such that every set of $n$ points in the plane in general position contains a crossing family of size at least $cn$? 
\end{problem}

Despite many attempts and several studied variants of the problem---see for example~\cite{aiKyScgVoVa22,alPachPinRado05,fulSuk13,laraRub19,prt21,valtr96}---to date no one has been able to resolve this problem. 
In fact, for a long time, not even an improvement of the lower bound of $\Omega(\sqrt{n})$ was obtained.
In 2021, Pach, Rubin, and Tardos~\cite{prt21} achieved a breakthrough result
by proving that $T(n) \in n^{1-o(1)}$.
More precisely, they showed the following result.

\begin{theorem}[\cite{prt21}]
\label{thm-natan}
Every set of $n$ points in the plane in general position contains a crossing family of size at least $n/2^{O(\sqrt{\log{n}})}$.
\end{theorem}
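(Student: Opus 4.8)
This is the Pach--Rubin--Tardos breakthrough and its full proof is intricate; below is the line of attack I would pursue. The classical ingredient to keep in mind is the construction behind the $\Omega(\sqrt N)$ bound: by an iterated ham-sandwich argument, every set of $N$ points in general position contains two \emph{mutually avoiding} subsets $A=\{a_1,\dots,a_k\}$ and $B=\{b_1,\dots,b_k\}$ with $k=\Omega(\sqrt N)$, meaning that no line through two points of $A$ meets $\operatorname{conv}(B)$ and vice versa. Listing the two sets in the (common) order in which they see each other, the segments $a_ib_{k+1-i}$ are pairwise crossing, so $k$-element mutually avoiding sets yield a crossing family of size $k$; more generally, having two point sets arranged so that one sees the other ``consistently'' is exactly the structure that converts into pairwise crossings.

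To go beyond $\sqrt n$, the plan is to bootstrap recursively: prove by induction that $f(n)\ge c\,t\cdot f(n/t)$, where $f(n)$ is the guaranteed crossing-family size, $c>0$ is an absolute constant, and $t=t(n)$ is a branching parameter we choose. Taking $t=2^{\Theta(\sqrt{\log n})}$ makes this recursion bottom out after only $\log_t n=\Theta(\sqrt{\log n})$ levels, and since each level costs merely the constant factor $c$, the total loss is $c^{\Theta(\sqrt{\log n})}=1/2^{O(\sqrt{\log n})}$, which is exactly the claimed bound. Concretely, at each step I would use a partitioning tool --- iterated ham-sandwich cuts, a simplicial partition, or a cutting --- to carve out of $P$ a large subset that splits into $t$ pieces $P_1,\dots,P_t$, each of size $\Omega(n/t)$ and arranged along a convex curve; recurse inside each $P_i$ to obtain a crossing family $F_i$; and then assemble a constant fraction of the segments of $\bigcup_i F_i$ into a single crossing family.

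The hard part --- and the real content of the paper --- is the assembly step, because its two requirements on the pieces pull in opposite directions. It would be cleanest if the pieces were pairwise mutually avoiding, so that each crossing among the pieces could be realised by genuinely many segments; but that is far too strong, since even two mutually avoiding subsets of size $\omega(\sqrt n)$ need not exist. Conversely, the structures that \emph{are} always available for large $t$ --- for instance, cutting $P$ into $t$ angular sectors around a well-chosen center, so the pieces become ``fat vertices'' in convex position --- are too weak, because segments joining opposite pieces need not pairwise cross. The engine of the proof must therefore be a finer hierarchical decomposition that threads this needle: pieces that are simultaneously large, cyclically ordered along a convex curve, and ``aligned'' well enough that recursively produced families in different pieces can be forced to pairwise cross after discarding only a bounded fraction of each. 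I would expect essentially all of the effort --- and all of the risk --- to be concentrated there; unrolling the recursion and reducing to mutually-avoiding-set arguments are comparatively routine.
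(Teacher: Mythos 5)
Your sketch correctly predicts the \emph{shape} of the bound ($\Theta(\sqrt{\log n})$ levels, each costing a constant factor, for a total loss of $2^{O(\sqrt{\log n})}$), but the mechanism you propose is not the one that works, and the step you yourself flag as carrying ``all of the risk'' is precisely the step that is missing. A spatial recursion of the form $f(n)\ge c\,t\,f(n/t)$, where $P$ is split into $t$ pieces of size $\Omega(n/t)$ arranged along a convex curve and the recursively obtained families are merged, cannot be carried out for general point sets: to force segments from $F_i$ and $F_j$ to cross one another you need the pieces to form what this paper calls a convex bundle (every transversal choice of one point per piece is in convex position), and guaranteeing a convex bundle of $t$ pieces each of size $\Omega(n/t)$ in an arbitrary point set is essentially equivalent to the open Problem~\ref{prob-crossingfamily}. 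Indeed, Theorem~\ref{thm-noncrVsCrStronger} of this paper executes exactly your assembly strategy, but only under the additional hypothesis that $P$ has no large non-crossing family, which is what makes the convex bundle available.

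The actual Pach--Rubin--Tardos argument, reproduced in Section~\ref{sec-partition} in a bipartite form, is a dichotomy rather than a spatial recursion. The key missing idea is the notion of an \emph{$\varepsilon$-avoiding pair}: a quantitative relaxation of your mutually avoiding sets in which, for two separated $m$-sets $A,B$, at most $\varepsilon m^2$ pairs of points of $A$ are ``incomparable'' in the partial order $<_B$ (the line through them meets ${\rm conv}(B)$), and symmetrically. Two lemmas do the work: (i) a cutting-lemma argument (Lemma~\ref{lem-lem3.3}) showing that any set of $n\ge cm/\varepsilon^4$ points contains an $\varepsilon$-avoiding pair of $m$-element subsets --- this is where the robustness of the definition pays off, since genuinely mutually avoiding sets of size $\omega(\sqrt n)$ need not exist, as you note; and (ii) an induction on the avoiding pair itself (Lemma~\ref{lem-lem3.7}, with the carefully tuned parameters $K=8^{\binom{s}{2}}$, $M=9^sK$, $\varepsilon=2^{-3s-11}$) extracting $K$ pairwise crossing $A$--$B$ segments from an $\varepsilon$-avoiding pair of $M$-sets; your ``constant loss per level over $\sqrt{\log n}$ levels'' intuition lives inside this lemma, but the recursion is on the near-linear orders $<_A,<_B$, not on spatial subdivisions of $P$. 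The theorem then follows by taking $s$ minimal such that no crossing family of size $8^{\binom{s}{2}}$ exists: Lemma~\ref{lem-lem3.7} forbids avoiding pairs of the corresponding size, Lemma~\ref{lem-lem3.3} then forces $n\in 2^{O(s)}K$, and minimality of $s$ yields a crossing family of size $K/2^{O(s)}=n/2^{O(\sqrt{\log n})}$. As written, your proposal is an honest research plan, not a proof.
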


Despite this remarkable progress, the problem of deciding whether $T(n) \in \Theta(n)$ remains open.
Note that the problem becomes trivial if restricted to 
point sets in \emph{convex position}, that is, point sets whose points are vertices of a convex polygon.
It is easy to see that any set of $n$ points in convex position determines a crossing family of size $\lfloor n/2 \rfloor$.

In this work, we study a variant of the crossing family problem where we relate the size of a largest crossing family in a point set to how convex/non-convex the point set is.
A \emph{non‐crossing family} in a set $P$ of points in the plane is a collection of four disjoint non-empty subsets $P_1$, $P_2$, $P_3$, and $P_4$ of $P$, 
such that for every choice of four points $p_i \in P_i$ the set $\{p_1,p_2,p_3,p_4\}$ is not in convex position, with~$p_4$ in the interior of the convex hull of $\{p_1,p_2,p_3\}$.
The \emph{size} of a non-crossing family is the minimum over the cardinalities of the four subsets $P_1$, $P_2$, $P_3$, and $P_4$.
see \cref{fig-example}(b) for an example.

\begin{figure}[ht]
    \centering
    \includegraphics{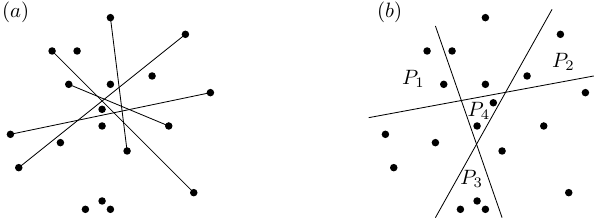}
    \caption{(a) A crossing family of size 5. (b) A non-crossing family of size 2.}
    \label{fig-example}
\end{figure}

It is easy to see and follows from Carath\'{e}dory's theorem 
that
a point set is in convex position if and only if it does not contain a non-crossing family of size one. 
Then, as we have mentioned, it is easy to find a linear-size crossing family.
Towards answering the question of whether $T(n) \in \Theta (n)$, we parameterize point sets according to the largest non-crossing family they contain, and try to find their largest crossing family in dependence of this parameter.
We thus study the following Ramsey-type problem.

\begin{problem}
\label{problem-crVsNoncr}
Given positive integers $m$ and $n$, what is the size of a largest crossing family in any set of $n$ points in the plane in general position that does not contain a non-crossing family of size $m$? 
\end{problem}

In particular, for which values of $m$ can we always find a crossing family that is larger than the bound
$n/2^{O(\sqrt{\log{n}})}$ from \cref{thm-natan} for any set of $n$ points in general position?

\subsection{Motivation: partitioning of complete geometric graphs}
The motivation for studying Problem~\ref{problem-crVsNoncr} also comes from the theory of partitioning complete geometric graphs into subgraphs.
A \emph{complete geometric graph} is a pair $(V,E)$, where $V$ is a set of points in the plane in general position and $E$ is the set of all closed line segments with endpoints in $V$.
The elements of $V$ are called \emph{vertices} and the elements of $E$ are called \emph{edges}.

In 2006, Bose, Hurtado, Rivera-Campo, and Wood~\cite{bhrw06} introduced the problem of partitioning the edge set of a complete geometric graph into plane trees, or, more generally, into plane subgraphs.
Recently, these problems have attracted considerable attention~\cite{aoorsstv22,ahmAlf20,dumiPach24GD,pss23}.

It is not hard to see that the edge set of every complete geometric graph on $n$ vertices can be partitioned into $n-1$ plane subgraphs.
The best known lower bound $\frac{n}{2}+1$~\cite{obeOrt21} is also linear in $n$, 
but it is not known whether a partition into 
$cn$ plane subgraphs for some constant $c<1$ is always possible.
However, if every point set contains a large crossing family or a large non-crossing family, we get such an upper bound: 
Bose, Hurtado, Rivera-Campo, and Wood~\cite{bhrw06} proved that the edge set of every complete geometric graph on $n$ vertices 
with $k$ pairwise crossing edges 
can be partitioned into $n - k$ plane trees (which are not necessarily spanning).
By Theorem~\ref{thm-natan}, this gives an upper bound of $n-n/2^{O(\sqrt{\log{n}})}$ for the partition problem.
On the other hand, Pach, Saghafian, and Schnider~\cite{pss23} showed that the edge set of every complete geometric graph on $n$ vertices whose vertex set contains a non-crossing family of size $bn$ can be partitioned into $(1-b)n$ plane subgraphs.

Motivated by these two results, Orthaber~\cite{workshopProblem1}
posed the following problem.

\begin{problem}[\cite{workshopProblem1}]
\label{prob-GG}
Is there a constant $c>0$ such that every set of $n$ points in the plane in general position contains a crossing family of size $cn$ or a non-crossing family of size $cn$?
\end{problem}

An affirmative solution to Problem~\ref{prob-GG} would imply that the edge set of any complete geometric graph on $n$ vertices can be partitioned into $cn$ plane subgraphs for some constant $c<1$.
This is a further motivation for studying Problem~\ref{problem-crVsNoncr}.

We note that a weaker variant of \cref{prob-GG} was independently posed by Schnider~\cite{workshopProblem2}; see Problem~\ref{prob-GG2} in Section~\ref{sec-openProblems}.
Bose, Hurtado, Rivera-Campo, and Wood~\cite{bhrw06} actually proved a stronger result, where the crossing family of size~$k$ is replaced by a more general configuration called a \emph{spoke set} by Schnider~\cite{workshopProblem2}.
A \emph{spoke set of size $k$} in a set $P$ of $n$ points in the plane in general position is a set $\mathcal{L}$ of $k$ pairwise non‐parallel lines such that in each unbounded region of the arrangement defined by the lines in $\mathcal{L}$ there lies at least one point of $P$; 
see \cref{fig-spokeSet}(a) for an example.
It is not difficult to see that if $P$ determines a crossing family of size $k$, then it determines a spoke set of size~$k$~\cite{bhrw06}.
The converse, however, is not true, as the example in \cref{fig-spokeSet}(b) illustrates. 

\begin{figure}[ht]
    \centering
    \includegraphics{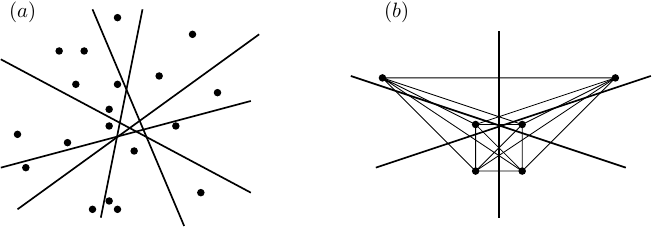}
    \caption{(a) A spoke set of size 5. (b) A set of 6 points in the plane in general position that determines a spoke set of size $3$ but does not determine a crossing family of size $3$.}
    \label{fig-spokeSet}
\end{figure}

Non-crossing families were also used by Dumitrescu and Pach~\cite{dumiPach24GD}, who proved that every complete geometric graph on a dense set of $n$ points can be partitioned into at most $cn$ plane subgraphs for some constant $c<1$.
Here, a set $P$ of $n$ points is \emph{dense} if the ratio between the maximum and the minimum distances in $P$ is of the order
of $\Theta(\sqrt{n})$.

\subsection{Motivation: geometric quasiplanarity of complete graphs}

For an integer $k\ge 2$, we say that a graph drawing is \emph{$k$-quasiplanar} if it contains no set of $k$ pairwise crossing edges. We say that a drawing of a graph $G$ is  \emph{straight-line} if each edge of $G$ is represented by a line segment between its endvertices. A graph $G$ is \emph{$k$-quasiplanar} if it admits a $k$-quasiplanar drawing. If $G$ admits a straight-line $k$-quasiplanar drawing, then $G$ is \emph{geometric $k$-quasiplanar}. 

The study of $k$-quasiplanar graphs was initiated in the 1990s and has gathered considerable attention from researchers in discrete geometry and graph drawing communities; see \cite{Ackerman09,Ackerman15,Angelini2018,AngeliniDLFS2024,AngeliniDLFS2024GD,Brandenburg2016,Capoyleas92,Felix23GD,Hoffmann}, among many others.
One of the problems in this line of research is determining the minimum value $k$ for which the complete graph $K_n$ is $k$-quasiplanar or geometric $k$-quasiplanar.
These values are only known for small values of $n$. In particular, Ackerman~\cite{Ackerman09} showed that $K_9$ is geometric $3$-quasiplanar
and Brandenburg \cite{Brandenburg2016} proved that $K_{10}$ is $3$-quasiplanar but not geometric $3$-quasiplanar. 
For larger values of $n$, it remains an interesting open problem to determine these values. In this language, Problem~\ref{problem-crVsNoncr} asks us to determine this value for specific classes of straight-line drawings of $K_n$, that is, the ones for which the underlying point set does not contain a non-crossing family of size $m$.

\subsection{Notation}

In this paper, we consider only finite sets of points in the plane in general position. For brevity, we sometimes just refer to them as point sets.
We use $\log$ to denote the logarithm with base 2.
For a positive integer $n$, we denote by $[n]$ the set $\{1,\dots,n\}$.
For the sake of clarity of presentation, we systematically omit floor and ceiling signs whenever they are not crucial. 
For a point set $P$, we use ${\rm conv}(P)$ to denote the convex hull of $P$.
If $A$ and $B$ are two sets of points in the plane, then we call them \emph{separated} if their convex hulls are disjoint.

\section{Our contribution}

We first show that forbidding non-crossing families of some small size can indeed help in finding larger crossing families than the ones obtained from Theorem~\ref{thm-natan}.
We actually find something stronger, a so-called \emph{convex bundle}.

For a positive integer $k$ and a point set $P$ in the plane, a \emph{convex bundle of size $k$} in $P$ is a $k$-tuple of disjoint non-empty subsets $C_1,\dots,C_k$ of $P$, 
such that for every choice of points $c_i \in C_i$ for every $i \in [k]$, the points $c_1,\dots,c_k$ are in convex position.
The \emph{width} of a convex bundle of size $k$ is the minimum over the cardinalities of its subsets $C_1,\dots,C_k$.

Our first result guarantees the existence of a convex bundle of size $\Theta(n/m)$ in any $n$-point set without a non-crossing family of size $m$.
To show it, we make use of the so-called Same-Type-Lemma~\cite{barVal98}, a classic result on orientations of point tuples in point sets. 

\begin{restatable}{theorem}{theoremNoncrVsCr}
\label{thm-noncrVsCr}
There is a constant $C > 0$ such that, for all positive integers $m$ and $k$, every set of at least $C k m$ points in the plane in general position contains either a convex bundle of size $k$ and width $m$ or a non-crossing family of size~$m$.
\end{restatable}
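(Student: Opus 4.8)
The plan is to use the Same-Type-Lemma to reduce to a point set in ``general combinatorial position'', where every $(k+3)$-tuple picking one point from each part has a fixed orientation type, and then argue that such a configuration must contain either a convex bundle or a non-crossing family. Concretely, start with an $n$-point set $P$ with $n \ge Ckm$ for a suitable constant $C$. Partition $P$ arbitrarily into $k+3$ groups $Q_1,\dots,Q_{k+3}$ of equal size $n/(k+3) = \Theta(m)$ (with the constant $C$ chosen large enough to absorb the loss below). Apply the Same-Type-Lemma to these groups: it yields subsets $Q_i' \subseteq Q_i$ with $|Q_i'| \ge c_{k+3}\, |Q_i|$, where $c_{k+3} > 0$ depends only on $k+3$, such that all transversals $(q_1,\dots,q_{k+3})$ with $q_i \in Q_i'$ have the same order type. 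The subtlety is that the Same-Type-Lemma constant $c_{k+3}$ is itself only doubly exponentially small in $k$, so a naive application would force $n$ to be exponential in $k$, not $\Theta(km)$. To get the clean bound we instead apply the lemma only to a bounded number of groups at a time, or—better—we observe that we do not need all of $P$ to be same-type: it suffices to find $k+3$ \emph{separated} clusters each of size $m$ such that the ``cluster order type'' is fixed, and this can be arranged greedily; I would first isolate this as the main technical step.

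Once we have the same-type clusters $C_1,\dots,C_{k+3}$ (relabel them), consider the order type $\omega$ of any transversal. There are two cases. If some transversal (equivalently, every transversal) has all $k+3$ points in convex position, then in particular the first $k$ of these clusters $C_1,\dots,C_k$ form a convex bundle of size $k$ and width $m$: any choice $c_i \in C_i$ extends to a transversal of all $k+3$ clusters, whose points are in convex position, so the $c_i$ are in convex position. Done. Otherwise, $\omega$ has a point in the interior of the convex hull of three others; say in the fixed order type, a point of $C_a$ always lies inside the triangle spanned by points of $C_b, C_c, C_d$ for some fixed indices $a,b,c,d$. Then $P_1 := C_b$, $P_2 := C_c$, $P_3 := C_d$, $P_4 := C_a$ is a non-crossing family of size $m$: for every choice $p_1 \in C_b$, $p_2 \in C_c$, $p_3 \in C_d$, $p_4 \in C_a$, the four points form a transversal (padding with arbitrary points from the remaining clusters to get a full transversal is not even needed—just take these four points, which lie in four distinct same-type clusters, so their $4$-point order type is the one induced by $\omega$), hence $p_4$ is interior to the triangle $p_1 p_2 p_3$. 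Done.

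The case analysis rests on a small combinatorial fact: any set of at least $4$ points in general position is either in convex position or has one point inside the triangle of three others (Carath\'eodory in the plane), and being ``same-type'' means this dichotomy, together with the identity of which point is interior and which triple surrounds it, is the same for all transversals. So I would phrase the middle of the argument as: fix any representatives $c_i^\circ \in C_i$; if $\{c_1^\circ,\dots,c_{k+3}^\circ\}$ is in convex position we are in the first case, otherwise some $c_a^\circ$ lies in the interior of $\mathrm{conv}\{c_b^\circ, c_c^\circ, c_d^\circ\}$ and by same-typeness the same holds after replacing each $c_i^\circ$ by any other point of $C_i$, giving the non-crossing family.

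The main obstacle, as flagged above, is quantitative: making the final bound $Ckm$ \emph{linear} in $k$ rather than exponential. The Same-Type-Lemma as usually stated loses a factor depending badly on the number of parts. The fix I expect to use is to not demand a global same-type structure on $k+3$ parts at once; instead, split $P$ into clusters, contract each cluster to a single representative point to obtain a ``reduced'' point set $R$ of size $\Theta(n/m)$, apply \emph{Carath\'eodory/convex-position Ramsey} reasoning on $R$ directly (either $R$ has $k$ points in convex position, giving a convex bundle after re-expanding the clusters—provided the clusters were chosen small and pairwise separated so that convex position is preserved under the expansion—or $R$ has a point inside a triangle, giving a non-crossing family), and only invoke the Same-Type-Lemma on the bounded-size sub-configuration of at most $4$ clusters involved in the interior relation, where its constant is an absolute constant. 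Choosing the clusters to be tiny separated ``clouds'' (e.g. intersect $P$ with $n/m$ disjoint tiny disks guaranteed by a pigeonhole / cutting argument) is what makes convex position and interiorness robust under perturbation within a cluster, and it is this geometric preprocessing step that I expect to require the most care to state cleanly.
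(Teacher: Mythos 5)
Your proposal correctly identifies the central quantitative obstacle (applying the Same-Type Lemma to $k+3$ parts at once forces $n$ to be super-linear in $k$) and correctly discards the naive version, but the proposed fix does not close the gap. First, the ``tiny separated clouds'' you want to extract by pigeonhole need not exist: for a general point set there is no guarantee of $n/m$ disjoint small disks each containing $m$ points whose diameters are negligible compared to inter-cluster distances, and without that smallness the order type of a transversal is \emph{not} determined by one representative per cluster --- making transversal order types well-defined after only a constant-factor loss is exactly what the Same-Type Lemma is for, and it cannot be replaced by a cutting/pigeonhole step. Second, even granting separated clusters, convex position of the representatives does not lift to the convex-bundle property: you would need to control all $\binom{k}{4}$ quadruples of clusters, and applying the Same-Type Lemma to each quadruple shrinks every cluster by a factor exponential in $k^3$, reintroducing the loss you were trying to avoid. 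Third, in your non-convex branch the order of operations is backwards: applying the Same-Type Lemma to the four clusters whose \emph{representatives} form a non-convex quadruple only guarantees that the shrunken subsets have \emph{some} common order type, which may well be the convex one, so you do not necessarily obtain a non-crossing family.

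The paper's proof supplies the missing mechanism. It first applies the Same-Type Lemma to $7$ vertically separated parts; absent a non-crossing family of size $m$, every transversal is in convex position, and the $x$-ordering forces at least $5$ of the parts into a cap (or cup) configuration. The middle cap part is then split into $k-4$ vertical strips to produce $k$ sets $Q_1,\dots,Q_k$ with $x(Q_1)<\cdots<x(Q_k)$, every transversal of $Q_1,Q_2,Q_j,Q_{k-1},Q_k$ already forming a $5$-cap. The sets are then processed in the order given by the permutation $\pi(i)=\lceil i/2\rceil$ (odd $i$) and $k+1-i/2$ (even $i$), so that each newly added set lies in the vertical strip between the two most recently processed ones; the Same-Type Lemma is applied only to the $5$ most recent sets at each step, and an inductive cap argument (again using the absence of non-crossing families to rule out the non-convex outcome) extends the cap by one point. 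Since each set participates in at most $5$ applications, the total shrinkage per set is the constant $c(2,5)^5$, which is what yields the linear bound $Ckm$. This iterative, order-controlled reuse of the Same-Type Lemma is the idea your proposal is missing.
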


Since every convex bundle of size $2k$ and width at least $1$ determines a crossing family of size $k$, we obtain the following immediate corollary of Theorem~\ref{thm-noncrVsCr}.

\begin{corollary}
\label{cor-noncrVsCr}
There is a constant $C > 0$ such that, for all positive integers $m$ and $k$, every set of at least $C k m$ points in the plane in general position contains either a crossing family of size $k$ or a non-crossing family of size $m$. \qed
\end{corollary}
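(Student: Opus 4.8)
The plan is to strengthen the conclusion and look for a \emph{cap bundle}: a $k$-tuple $C_1,\dots,C_k$ of separated subsets of $P$, listed in left-to-right order and each of size at least $m$, such that every transversal $c_1,\dots,c_k$ (with $c_i\in C_i$) forms a concave chain, i.e.\ $c_i$ lies above the line $c_{i-1}c_{i+1}$ for every $2\le i\le k-1$. A chain with strictly increasing $x$-coordinates is in convex position as soon as all its consecutive triples turn the same way, so a cap bundle of size $k$ and width $m$ is in particular a convex bundle of size $k$ and width $m$; hence it suffices to produce such a cap bundle or a non-crossing family of size $m$. The point of aiming for a cap bundle is exactly that ``every consecutive triple is concave'' is a \emph{local} condition that nevertheless forces the \emph{global} convex-position property; controlling only bounded windows of slabs, which is what the Same-Type Lemma lets us do cheaply, does not by itself certify global convex position.

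First I would partition $P$ by $\Theta(k)$ vertical lines into slabs, each containing $\Theta(m)$ points, and build the cap bundle by induction on $k$. For $k$ below an absolute constant, apply the Same-Type Lemma to a suitable constant number of these slabs; since the number of parts is a constant, the guaranteed fraction is an absolute constant, so each resulting subset still has $\Theta(m)$ points. If the common order type of the reduced slabs places one point inside the triangle spanned by three others, the corresponding four slabs form a non-crossing family of size at least $m$ and we are done; otherwise all the (reduced) points are in convex position, and the ``upper chain'' of this convex configuration is a cap bundle of the required size (if it is the lower chain that is long, one gets a ``cup bundle'', which is equally good as a convex bundle).

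For the inductive step I would cut $P$ by a vertical line into $P_{\mathrm{left}}$ and $P_{\mathrm{right}}$ in the proportion $\lceil k/2\rceil:\lfloor k/2\rfloor$, recurse on $P_{\mathrm{right}}$, and then recurse on $P_{\mathrm{left}}$ while passing the first couple of parts of $P_{\mathrm{right}}$'s cap bundle into the recursion as a ``right boundary'' with which its output must be compatible. Concatenating the two bundles, the only consecutive triples not already controlled by the two sub-bundles are the two straddling the cut; a final application of the Same-Type Lemma to the constant number of parts at the junction either certifies those triples concave or exhibits a non-crossing family of size $m$. Because in a balanced recursion each part of the final bundle lies next to a junction for only $O(1)$ of the $O(\log k)$ levels, the multiplicative loss suffered by any single part is bounded by an absolute constant, so a uniform constant $C$ suffices and $\Theta(km)$ points are enough.

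The hard part will be the junction. The Same-Type Lemma hands us \emph{some} fixed orientation of a bounded configuration but not one of our choosing, so one must argue that whenever the ``wrong'' (convex) orientation occurs at a junction it can be converted into a genuine non-crossing family of size $m$ --- for instance by reserving a block of points lying below the whole construction to play the fourth role of the family --- and one must keep the two halves' bundles oriented consistently (both caps, say), since concatenating a cap with a cup produces a chain with an inflection that need not be in convex position. Getting this orientation bookkeeping to line up while respecting the constant-factor budget on the part sizes is where essentially all of the work lies; the Same-Type Lemma itself is used only as a black box on constant-size subfamilies.
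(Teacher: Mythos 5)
Your high-level plan---vertical slabs of $\Theta(m)$ points, repeated constant-size applications of the Same-Type Lemma with only $O(1)$ applications touching any one part, and a \emph{cap} bundle as the stronger local-to-global target---is exactly the skeleton of the paper's proof of Theorem~\ref{thm-noncrVsCr} (the paper sweeps the slabs from the outside in, one at a time, rather than by balanced divide-and-conquer, but that difference is cosmetic). However, the step you yourself flag as ``the hard part'' is not a bookkeeping detail: it is the entire content of the argument, and as stated your junction step fails. If the Same-Type Lemma returns the ``wrong'' orientation on a consecutive triple (or even on a quadruple) straddling the cut, this does \emph{not} exhibit a non-crossing family: three points are always in convex position, and four points forming a zigzag such as $(0,0),(1,1),(2,0),(3,1)$ have a right turn followed by a left turn yet are still in convex position. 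So ``a final application of the Same-Type Lemma at the junction either certifies those triples concave or exhibits a non-crossing family'' is false; a wrongly oriented local window gives you nothing, and your proposed fix of ``reserving a block of points lying below the whole construction'' presupposes points that need not exist in $P$.

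What the paper does instead is build, once and for all, a global scaffold before the iteration starts: after one application of the Same-Type Lemma to seven slabs it extracts five slabs $Q_{a_1},\dots,Q_{a_5}$ forming a 5-cap bundle, keeps $Q_{a_1},Q_{a_2}$ (far left) and $Q_{a_4},Q_{a_5}$ (far right) intact, and subdivides only the middle slab $Q_{a_3}$ into the remaining $k-4$ parts. The scaffold guarantees that every middle point lies \emph{above} every line through $Q_{a_1}\times Q_{a_2}$. Then, when a newly added set $R_{\ell+1}$ sits below the line through its two $x$-neighbours $R_{\ell-1},R_{\ell}$, its points are trapped in a vertical strip between the segments $r_1r_2$ and $r_{\ell-1}r_\ell$, so the five sets $R_1,R_2,R_{\ell-1},R_\ell,R_{\ell+1}$ admit no transversal in convex position; one more Same-Type application plus Carath\'eodory then converts this into a non-crossing family of size $m$. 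This scaffold also disposes of your cap-versus-cup consistency worry, since the orientation is fixed globally at the outset. To repair your divide-and-conquer version you would have to thread such a scaffold through both recursive calls, at which point you have essentially reconstructed the paper's linear sweep.
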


In particular, it follows from Corollary~\ref{cor-noncrVsCr} that if our point set does not contain a constant size 
non-crossing family, then it contains a crossing family of linear size.
Working with crossing families directly, we use Theorem~\ref{thm-noncrVsCr} in combination with \cref{thm-natanBipartite} below to improve the bound from \cref{cor-noncrVsCr} as follows, by this obtaining our main theorem.

\begin{restatable}{theorem}{theoremNoncrVsCrStronger}
\label{thm-noncrVsCrStronger}
There is a constant $C'>0$ such that, for every positive integer $m$, every set of $n \geq C'm$ points in the plane in general position contains either a non-crossing family of size $m$ or a crossing family of size $n/2^{O(\sqrt{\log{m}})}$.
\end{restatable}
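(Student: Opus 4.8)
The plan is to feed the structure produced by \cref{thm-noncrVsCr} into the bipartite version of the Pach--Rubin--Tardos theorem, namely \cref{thm-natanBipartite}. If $m$ is bounded by an absolute constant, then $2^{O(\sqrt{\log m})}=O(1)$ and the statement already follows from \cref{cor-noncrVsCr}, which in the absence of a non-crossing family of size $m$ yields a crossing family of size $\Omega(n/m)=\Omega(n)$; so assume $m$ is larger than any needed constant. Let $C$ be the constant from \cref{thm-noncrVsCr} and put $k:=n/(Cm)$, rounded down to an even integer; by taking $C'$ large enough, $k$ is at least any prescribed constant whenever $n\ge C'm$, and $n\ge Ckm$. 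Apply \cref{thm-noncrVsCr} to $P$ with parameters $m$ and $k$. If the outcome is a non-crossing family of size $m$ we are done, so assume instead that we obtain a convex bundle $C_1,\dots,C_k$ of size $k$ and width $m$. Since the Same-Type Lemma actually produces groups all of whose transversals share a common order type, we may assume that every transversal of $C_1,\dots,C_k$ is in convex position in one and the same cyclic order, which after relabeling we take to be $C_1,\dots,C_k$.

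The rest is a combinatorial gluing argument. First I would check that, for each $i\in[k/2]$, the groups $C_i$ and $C_{i+k/2}$ are separated. Fix $b\in C_{i+1}$ and $d\in C_{i+k/2+1}$, which exist since $k\ge 4$. For every $a\in C_i$ and $c\in C_{i+k/2}$, the quadruple $a,b,c,d$ is a transversal of four groups occurring in the cyclic order $C_i,C_{i+1},C_{i+k/2},C_{i+k/2+1}$, hence spans a convex quadrilateral with this cyclic vertex order, so the diagonal $bd$ separates $a$ from $c$; thus $a$ and $c$ lie on opposite sides of the line through $b$ and $d$. If $C_i$ had points on both sides of that line, then $C_{i+k/2}$ would have to be empty, a contradiction; hence $C_i$ lies entirely on one side and $C_{i+k/2}$ entirely on the other, so the two groups are separated.

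Now apply \cref{thm-natanBipartite} to each separated pair $(C_i,C_{i+k/2})$ of $m$-point sets, obtaining for every $i\in[k/2]$ a crossing family $\mathcal F_i$ of size $m/2^{O(\sqrt{\log m})}$ each of whose segments has one endpoint in $C_i$ and one in $C_{i+k/2}$. Set $\mathcal F:=\bigcup_{i\in[k/2]}\mathcal F_i$. Segments within a single $\mathcal F_i$ cross by construction. For $i<j$ in $[k/2]$, any $s\in\mathcal F_i$ and $s'\in\mathcal F_j$ have their four endpoints in the distinct groups $C_i,C_j,C_{i+k/2},C_{j+k/2}$, which occur in exactly this cyclic order; so these four endpoints span a convex quadrilateral in which $s$ joins the first and third vertices and $s'$ joins the second and fourth, whence $s$ and $s'$ are its two diagonals and cross. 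Therefore $\mathcal F$ is a crossing family, and $|\mathcal F|=(k/2)\cdot m/2^{O(\sqrt{\log m})}=n/2^{O(\sqrt{\log m})}$, as required.

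The main obstacle is \cref{thm-natanBipartite} itself, the bipartite analogue of \cref{thm-natan}, which carries the real geometric content and must be established by adapting the construction of Pach, Rubin, and Tardos; the argument above is essentially bookkeeping, whose only delicate point is that one needs the full ``same order type'' conclusion of the Same-Type Lemma rather than merely convex position of transversals, since it is this stronger property that pins down the cyclic order of the groups and thereby makes the separation and crossing claims used in the gluing go through.
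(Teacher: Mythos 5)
Your proposal is correct and follows essentially the same route as the paper: obtain a convex bundle of even size via \cref{thm-noncrVsCr}, pair antipodal groups in the cyclic order, apply \cref{thm-natanBipartite} to each pair, and take the union of the resulting bipartite crossing families. The only difference is that you explicitly verify the separation of the antipodal groups and the pairwise crossing across different pairs, steps the paper treats as immediate from the definition of a convex bundle and the ordering coming from the Same-Type Lemma.
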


Since no set of $n$ points in the plane contains a non-crossing family of size~$n$, Theorem~\ref{thm-natan} can be obtained as a corollary of Theorem~\ref{thm-noncrVsCrStronger} by setting $m=n$. The lower bound on the size of the crossing families that we obtain by Theorem~\ref{thm-noncrVsCrStronger} is asymptotically larger than the bounds from Theorem~\ref{thm-natan} and Corollary~\ref{cor-noncrVsCr},
as long as the point set does not contain a non-crossing family of size $n^{\Omega(1)}$. 
For example, if we forbid non-crossing families of size $\log{n}$, then we find a crossing family of  size $\Omega(n/2^{\sqrt{\log{\log{n}}}})$ instead of $n/2^{O(\sqrt{\log{n}})}$ and $\Omega(n/\log{n})$ that we would get from Theorem~\ref{thm-natan} and Corollary~\ref{cor-noncrVsCr}, respectively.
Further, Theorem \ref{thm-noncrVsCrStronger} implies that for any constant $m$, if $P$ is a set of $n$ points in the plane in general position with no non-crossing family of size $m$, then the complete geometric graph on $P$ can be decomposed into $cn$ plane subgraphs for some  constant $c<1$, by this resolving the problem of Bose, Hurtado, Rivera-Campo and Wood~\cite{bhrw06} for such point sets. 

To prove Theorem~\ref{thm-noncrVsCrStronger}, we use the following slight strengthening of Theorem~\ref{thm-natan}, which we obtain by modifying the proof of the result by Pach, Rubin, and Tardos~\cite{prt21}.

\begin{restatable}{theorem}{theoremNatanBipartite}
\label{thm-natanBipartite}
For every set $P$ of $n\ge2$ points in the plane in general position and every partition of $P$ into two separated subsets $P_1$ and $P_2$ such that $||P_1|-|P_2||\le 1$, $P$ contains a crossing family $\mathcal{F}$ of size at least $n/2^{O(\sqrt{\log{n}})}$ where each segment from $\mathcal{F}$ has one endpoint in $P_1$ and one endpoint in~$P_2$.
\end{restatable}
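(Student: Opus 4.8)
The plan is to revisit the proof of \cref{thm-natan} due to Pach, Rubin, and Tardos~\cite{prt21} and to check that, if a separating line is carried along throughout the recursion, the construction can be made to output only segments crossing that line. Since $P_1$ and $P_2$ are separated, fix a line $\ell$ with ${\rm conv}(P_1)$ on one side and ${\rm conv}(P_2)$ on the other; a segment with endpoints in $P$ has one endpoint in $P_1$ and one in $P_2$ if and only if it crosses $\ell$. Call such a segment \emph{$\ell$-transversal}. Thus it suffices to produce a crossing family of size $n/2^{O(\sqrt{\log n})}$ consisting of $\ell$-transversal segments, and we prove this by induction, the recursive calls being to the same (bipartite) statement on pairs of point sets that are separated and roughly balanced in size --- this is precisely why we need the strengthened formulation rather than \cref{thm-natan} itself. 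As a warm-up, the bound $\Omega(\sqrt n)$ in this bipartite form is essentially the classical argument of Aronov et al.~\cite{aegkkps94}: reading off a suitable permutation from $P_1$ and $P_2$ and applying the Erd\H{o}s--Szekeres theorem, one keeps the monotone subsequence corresponding to pairwise \emph{crossing} (rather than pairwise non-crossing) $\ell$-transversal segments, whose length is $\Omega(\sqrt{\min(|P_1|,|P_2|)})=\Omega(\sqrt n)$ by the balance hypothesis; this also serves as a base case.

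Next I would trace through the recursion of~\cite{prt21}, whose two essential ingredients are: (i) the ``two separated sets'' primitive above, and (ii) a combination step in which crossing families built on a bounded number $t\approx 2^{\sqrt{\log n}}$ of pairwise separated sub-configurations --- which, possibly after minor modifications, can be taken to lie along a convex curve, so that any choice of one representative per sub-configuration has a fixed cyclic order --- are merged by pairing the sub-configurations in an order-preserving, ``antipodal'' way; the segments of the two families associated with a pair of sub-configurations are then diagonals of a convex quadrilateral, hence pairwise crossing. Both ingredients are compatible with a prescribed transversal line. The base case already outputs $\ell$-transversal segments when the two separated sets lie on opposite sides of $\ell$. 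In the combination step, one insists that every pair consists of one sub-configuration on the $P_1$-side of $\ell$ and one on the $P_2$-side; here the hypothesis $||P_1|-|P_2||\le1$ enters, since with roughly equal sub-configurations it forces roughly half of them to lie on each side of $\ell$, so that an order-preserving pairing of the $P_1$-side pieces with the $P_2$-side pieces exists and passes balanced separated sub-instances to the recursion, maintaining the bipartite invariant. Unrolling the recursion $f(n)\ge (t/2)\cdot f(O(n/t))$ over its $O(\sqrt{\log n})$ levels then gives $f(n)\ge n/2^{O(\sqrt{\log n})}$, exactly as in~\cite{prt21}.

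The main obstacle is the bookkeeping required to make this honest: one has to check that \emph{every} decomposition carried out in~\cite{prt21} can be performed compatibly with $\ell$ --- so that each piece either lies on one side of $\ell$ or, when it unavoidably straddles $\ell$, is handed to the recursion together with its induced split across $\ell$, which a ham-sandwich-type argument keeps balanced --- and that imposing these constraints costs only constant factors per level, so that the total loss stays within $2^{O(\sqrt{\log n})}$. The ``degenerate'' branches of the case analysis of~\cite{prt21}, where no convenient convex-curve decomposition is available, must be revisited as well; there a large crossing family is produced essentially directly, and the same ``place each segment on the correct side of $\ell$'' adjustment applies. I expect this careful re-examination of the case analysis of~\cite{prt21}, rather than any genuinely new combinatorial idea, to be where the real effort lies.
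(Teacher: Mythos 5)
The high-level strategy you describe --- re-examine the Pach--Rubin--Tardos argument and check that a separating line can be carried along --- is indeed the right strategy, but your description of what that argument actually does is not accurate, and you never carry out the ``bookkeeping'' that you yourself identify as the real content of the proof. In particular, the recursion you sketch (sub-configurations on a convex curve, antipodal order-preserving pairing, a ham-sandwich step to rebalance pieces straddling $\ell$) does not correspond to the structure of the proof in~\cite{prt21}, so the plan to ``trace through the recursion'' in this form cannot be checked against anything concrete. As written, the proposal is a promise of a proof rather than a proof.

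The key observation that makes the actual argument short, and which your proposal misses, is that the PRT machinery already produces \emph{bipartite} crossing families at the very last step. Their Lemma~3.7 (reproduced here as \cref{lem-lem3.7}) takes an $\varepsilon$-avoiding pair $(A,B)$ and outputs $K$ pairwise crossing segments \emph{each connecting a point of $A$ to a point of $B$}. So there is nothing to fix there. The only place where the split into $P_1$ and $P_2$ has to be enforced is in \cref{lem-lem3.3}, the cutting-lemma step that finds an $\varepsilon$-avoiding pair of $m$-point clusters: one has to ensure that the two clusters can be chosen with $A\subseteq P_1$ and $B\subseteq P_2$. This is done in the paper by a purely counting argument: build the cutting as usual, observe that a line $\ell$ separating $P_1$ and $P_2$ has a zone containing at most $\varepsilon n/16$ points, so at most $\varepsilon n/(16m)$ clusters meet both sides; subtracting the few exceptional cells leaves at least $\tfrac{n^2}{8m^2}$ ordered cluster pairs $(C_1,C_2)$ with $C_1\subseteq P_1$ and $C_2\subseteq P_2$, while the same zone argument bounds the number of \emph{non}-$\varepsilon$-avoiding cluster pairs by $\tfrac{n^2}{32m^2}$. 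This forces the existence of the desired pair, the balance hypothesis $||P_1|-|P_2||\le 1$ being used only to guarantee that both sides produce $\Theta(n/m)$ clusters. The final ``smallest $s$ such that $K=8^{\binom{s}{2}}$ fails'' argument then goes through verbatim with ``crossing family'' replaced by ``crossing family between $P_1$ and $P_2$.'' No convex-curve decomposition, antipodal pairing, or ham-sandwich rebalancing appears anywhere, and no re-examination of any ``degenerate branches'' of~\cite{prt21} is needed. I would encourage you to read the structure of~\cite{prt21} more carefully: the whole modification lives in the counting step of their Lemma~3.3, not in the extraction of the crossing family.
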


We also consider the algorithmic aspects of our results.
The proofs of Theorems~\ref{thm-noncrVsCr} and~\ref{thm-noncrVsCrStronger} are both constructive. By analyzing their time complexity, we obtain an expected 
linear time algorithm for Theorem~\ref{thm-noncrVsCr} and an expected polynomial time algorithm for Theorem~\ref{thm-noncrVsCrStronger}.  

\begin{restatable}{theorem}{theoremTimeComplexity}
\label{thm-timeComplexity}
There is a constant $C>0$ such that for all positive integers $k$ and~$m$, if $P$ is a set of $n=Ckm$ points in the plane in general position, 
then a convex bundle of size $k$ and width $m$ or a non-crossing family of size $m$ can be computed in expected time $O(n)$.

Further, a crossing family of size $n/2^{O(\sqrt{\log{m}})}$ or a non-crossing family of size $m$ can be computed in expected time $O(nm^{1+O((\log{m})^{-1/3})})$.
\end{restatable}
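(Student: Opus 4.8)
Both parts of the statement reduce to a running-time analysis of the constructive proofs of Theorems~\ref{thm-noncrVsCr} and~\ref{thm-noncrVsCrStronger}, and this is how I would proceed. For the first part, I would go through the construction used to prove Theorem~\ref{thm-noncrVsCr} and bound the cost of each step. Its only geometric ingredients are: splitting a point set into parts of prescribed sizes and testing on which side of a line a point lies, which a randomized selection/median routine handles in expected time linear in the current size; a bounded number of applications of the Same-Type Lemma~\cite{barVal98}, each to a bounded number of parts; and orientation tests on point configurations of bounded size, used to recognize whether the outcome is a convex bundle or a non-crossing family. The key point for the second ingredient is that the standard proof of the Same-Type Lemma is itself algorithmic when the number of parts is a fixed constant: it reduces to a constant number of planar ham-sandwich cuts, each computable in linear time, so the required same-type subsets can be extracted in expected linear time. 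Consequently each step of the construction costs expected time linear in the size of the point set it operates on, and since the construction is organized so that the total size over all of these steps is $O(n)$ (for instance, along a recursion whose subproblems shrink geometrically), the whole algorithm runs in expected time $O(n)$, which is optimal as the input has size~$n$.

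For the second part, I would revisit the construction behind Theorem~\ref{thm-noncrVsCrStronger}. One first runs the algorithm of the first part with $k=\Theta(n/m)$; if it outputs a non-crossing family of size $m$ we are done, and otherwise it outputs, in expected time $O(n)$, a convex bundle $C_1,\dots,C_k$ of width $m$ that is moreover \emph{same-type}---all its transversals realize the same order type, so in particular they lie in convex position in one fixed cyclic order $C_1,\dots,C_k$---and whose blocks are pairwise separated. Pairing $C_i$ with $C_{i+k/2}$ for $i=1,\dots,k/2$ gives $k/2$ pairwise disjoint pairs, each consisting of two separated sets of exactly $m$ points, so the size condition of Theorem~\ref{thm-natanBipartite} holds; running the algorithm underlying Theorem~\ref{thm-natanBipartite} on each pair yields a crossing family $\mathcal{F}_i$ of size $m/2^{O(\sqrt{\log m})}$ with one endpoint in $C_i$ and one in $C_{i+k/2}$. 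A short interleaving argument (the endpoints of a segment of $\mathcal{F}_i$ and a segment of $\mathcal{F}_j$ form a transversal of four blocks, hence lie in convex position in cyclic order $C_i,C_j,C_{i+k/2},C_{j+k/2}$) shows that any segment of $\mathcal{F}_i$ crosses any segment of $\mathcal{F}_j$ for $i\ne j$, so $\bigcup_{i}\mathcal{F}_i$ is a crossing family, of size $(k/2)\cdot m/2^{O(\sqrt{\log m})}=n/2^{O(\sqrt{\log m})}$. The expected running time is $O(n)$ for the first phase plus $\Theta(n/m)$ times the cost of the algorithm underlying Theorem~\ref{thm-natanBipartite} on a $2m$-point instance; extracting from the argument of Pach, Rubin, and Tardos~\cite{prt21} (and from its bipartite refinement) the bound $m^{2+O((\log m)^{-1/3})}$ for such instances gives the claimed total of $O(n\,m^{1+O((\log m)^{-1/3})})$.

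The step I expect to be the main obstacle is exactly this last one: the proof of Pach, Rubin, and Tardos is intricate and was not written with efficiency in mind, so converting it---and in particular the strengthening in Theorem~\ref{thm-natanBipartite}---into an algorithm and certifying a running time of $m^{2+o(1)}$ on $m$-point instances requires carefully unrolling the recursion in their argument and using randomization for the cuttings and samplings it relies on. The remaining points are more routine but still need to be checked: that the Same-Type Lemma with a bounded number of parts is implementable in expected linear time, and that the auxiliary bookkeeping in the proof of Theorem~\ref{thm-noncrVsCr}---maintaining the partition, the cyclic order of the blocks, and the separation of opposite blocks---stays within the linear-time budget.
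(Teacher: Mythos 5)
Your proposal is correct and its overall accounting is the same as the paper's: the first part is a step-by-step cost analysis of the proof of Theorem~\ref{thm-noncrVsCr}, consisting of $O(k)=O(n/m)$ applications of the Same-Type Lemma to a constant number of sets of size $O(m)$ each (total $O(n)$), and the second part runs the algorithm behind Theorem~\ref{thm-natanBipartite} on each of the $\Theta(n/m)$ opposite pairs of blocks of the convex bundle at cost $m^{2+O((\log m)^{-1/3})}$ per pair, giving $O(n\,m^{1+O((\log m)^{-1/3})})$; the step you flag as the main obstacle (certifying the near-quadratic running time of the Pach--Rubin--Tardos construction) is handled in the paper simply by citing their own remark that their proof yields an $n^{2+O((\log n)^{-1/3})}$-time algorithm. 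The one genuine divergence is how the Same-Type Lemma is made algorithmic. You propose following the original B\'ar\'any--Valtr argument via a bounded number of linear-time planar ham-sandwich cuts; the paper instead follows the Fox--Pach--Suk proof and invokes Rubin's randomized semi-algebraic regularity lemma (Theorem~\ref{thm-semialgebraicRegularityPartite}), which for constant $d$ and $r$ runs in expected linear time and is the sole source of the word ``expected'' in the theorem. Your route, if carried out in full (a linear-time planar ham-sandwich cut, a linear-time test for whether some line stabs all the relevant convex hulls, and the observation that only $O(\log(1/c(2,r)))$ halving rounds are needed), would yield a \emph{deterministic} $O(n)$ bound for the first part --- exactly the strengthening the paper explicitly says would follow from a deterministic Same-Type subroutine --- so it is arguably the stronger approach, at the price of verifying these implementation details rather than quoting a ready-made black box.
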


We remark that the non-determinism in the time complexities stems from an analysis of the runtime requirements of the Same-Type-Lemma. A deterministic variant of this building block would yield a deterministic analogue of \cref{thm-timeComplexity}.
Further, we note that the proof of Theorem~\ref{thm-natan} is also constructive and results in an $n^{2+O((\log{n})^{-1/3})}$ time algorithm for finding an according crossing family in a set of $n$ points; see~\cite{prt21}.

Finally, we show that the size of the largest spoke set and the largest crossing family in a point set can differ by a constant multiplicative factor, showcasing the different behavior of these two closely related notions. 

\begin{proposition} \label{prop-spokevscross}
For any odd integer $k$, there is a set $P$ of $3k-1$ points in the plane in general position, such that the largest spoke set in $P$ contains $\lfloor1.5k\rfloor$ lines but $P$ does not contain a crossing family of size larger than $k$. 
\end{proposition}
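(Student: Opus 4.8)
The plan is to construct $P$ as the disjoint union of $k$ small clusters arranged in convex position around a circle. Take $k$ points $v_1,\dots,v_k$ placed as the vertices of a regular $k$-gon. For all but one index, replace $v_i$ by a tiny cluster of three points in near-convex position that behaves, at the global scale, exactly like the single point $v_i$; for the remaining index, use a cluster of only two points. This yields $3(k-1)+2 = 3k-1$ points in general position (after a generic perturbation to kill collinearities). The intuition is that the clusters act as ``fat vertices'' of a convex $k$-gon, so at coarse resolution the point set looks convex, but the internal structure of each cluster is too small to contribute any crossings beyond what the $k$-gon already gives.

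For the crossing-family upper bound, I would argue that any segment of a crossing family essentially behaves like a chord of the $k$-gon: a crossing family in $P$ cannot contain two segments with both endpoints in the same cluster (the clusters are in convex position, so such short segments cross almost nothing), and two segments with endpoints in the same pair of clusters cannot both be in a crossing family either if the clusters are small enough and convex. Hence a crossing family of size $t$ determines $t$ pairwise crossing chords among $k$ ``vertices'', which forces $t \le \lfloor k/2 \rfloor$ by the classical convex-position bound --- in particular $t \le k$, and one should be able to sharpen the constant if needed; since $k$ is odd, $\lfloor k/2\rfloor < k$ so the stated bound ``not larger than $k$'' holds comfortably. The key point here is a careful ``contraction'' lemma: collapsing each cluster to its representative point $v_i$ sends a crossing family in $P$ to a crossing family of chords in the convex $k$-gon, possibly with small multiplicity that we bound by $1$ per cluster pair.

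For the spoke-set lower bound, I would directly exhibit $\lfloor 1.5k\rfloor$ pairwise non-parallel lines whose arrangement has at least one point of $P$ in each unbounded region. The arrangement of $\ell$ lines in general position has $2\ell$ unbounded regions, so with $\ell = \lfloor 1.5k \rfloor$ lines we must cover up to $3k$ unbounded cells, and we have exactly $3k-1$ points: the construction must be tight. I would take roughly $k/2$ lines that are ``long chords'' of the $k$-gon (each separating the vertex set into two near-equal halves, in the spirit of a spoke set for the convex polygon) together with $k$ additional lines, one passing near each three-point cluster, exploiting the extra two points inside that cluster to populate the two fresh unbounded regions that line creates. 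Choosing the cluster geometry and the slopes so that every unbounded region of the final arrangement receives a point is the crux, and it is where the odd parity of $k$ and the exact count $3k-1$ are used.

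I expect the main obstacle to be the spoke-set construction and its verification: one has to place $\lfloor 1.5k\rfloor$ lines simultaneously so that all $2\lfloor 1.5k\rfloor$ unbounded regions are hit, which is a global combinatorial constraint, and the clusters must be designed in tandem with the lines rather than independently. A clean way to organize this is to first fix the $\lfloor 1.5k \rfloor$ slopes and the combinatorial type of the line arrangement, read off which unbounded wedge each point must lie in, and only then choose the actual coordinates of the $3k-1$ points (clusters included) to realize those incidences; general position is then obtained by a final perturbation small enough to preserve all the strict incidence-region memberships and the crossing-family upper bound simultaneously. The crossing-family upper bound argument, by contrast, I expect to be routine once the contraction lemma is stated precisely.
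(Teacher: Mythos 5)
The main gap is in your crossing-family upper bound, and it is fatal as stated. In your construction all $3k-1$ points sit in tiny clusters at the vertices of a convex $k$-gon, so the set is (essentially) in convex position. But $n$ points in convex position always admit a crossing family of size $\lfloor n/2\rfloor$, which for $n=3k-1$ is $\lfloor 1.5k\rfloor>k$, violating the bound you want. The ``contraction lemma'' you rely on --- at most one crossing-family segment per pair of clusters --- is false: two clusters sitting on (nearly) opposite sides of the polygon can be joined by up to three \emph{pairwise crossing} segments (nearly parallel segments cross whenever the orders of endpoints on the two sides are reversed), and one can then add segments joining other antipodal cluster pairs that cross all three. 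So the map to chords of the $k$-gon has unbounded multiplicity, and the reduction to the classical $\lfloor k/2\rfloor$ bound for convex chords does not go through --- note also that $\lfloor k/2\rfloor$ would be a much stronger bound than the statement's $k$, which should already be a warning sign.

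The paper's construction differs in exactly the respect you are missing: it keeps only $2k$ points near the polygon vertices (two per vertex, the endpoints of a tiny segment perpendicular to the angle bisector) and places the remaining $k-1$ points \emph{strictly inside} the innermost cell $C$ of the complete geometric graph on those $2k$ outer points. This creates a clean inside/outside dichotomy: segments between interior points stay inside $C$, segments between outer points stay outside $C$, so no two such segments cross and any crossing family is either entirely supported on segments with at least one interior endpoint (hence has size at most $|P\cap C|\le k$), or it contains a segment $s$ between two outer points, in which case the line through $s$ misses $C$, all interior points lie on one side, and the smaller side carries at most $k-1$ points, again giving size at most $k$. Your clusters-of-three construction has no such dichotomy and no interior points to anchor it. Your spoke-set plan --- fix $\lfloor 1.5k\rfloor$ slopes, read off which unbounded wedge each point must occupy, then realize the incidences and perturb --- is reasonable in outline and the tightness count ($2\lfloor 1.5k\rfloor=3k-1$) is right, but the crossing-family bound must be repaired first, and repairing it effectively forces you to move some points off the hull, as the paper does.
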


Note that if Problem \ref{prob-crossingfamily} admits an affirmative solution, then the sizes of the maximal spoke set and the maximal crossing family differ by at most a constant multiplicative factor. 

\section{Proofs of Theorems~\ref{thm-noncrVsCr} and~\ref{thm-noncrVsCrStronger}}\label{sec-main}

This section is devoted to proving our main result, the existence of a large crossing family or a large non-crossing family (Theorem~\ref{thm-noncrVsCrStronger}), 
for which we will use and first prove the existence of a large convex bundle or a large non-crossing family (Theorem~\ref{thm-noncrVsCr}).

One crucial ingredient for the proof of Theorem~\ref{thm-noncrVsCr} is a well-known result by B\'{a}r\'{a}ny and Valtr~\cite{barVal98}, called the \emph{Same-Type lemma}, on the combinatorics of point sets in~$\mathbb{R}^d$. 

For an integer $d \geq 2$ and non-coplanar points $p_1,\dots,p_{d+1} \in \mathbb{R}^d$, where $p_{i,j}$ is the $j$th coordinate of $p_i$, the \emph{orientation} of the $(d+1)$-tuple $(p_1,\dots,p_{d+1})$ is the sign of the determinant of the matrix
\[
\begin{pmatrix}
1 & 1 & \cdots & 1\\
p_{1,1} & p_{2,1} & \cdots & p_{d+1,1}\\
\vdots & \vdots & \vdots &\vdots \\
p_{1,d} & p_{2,d} &\cdots & p_{d+1,d}
\end{pmatrix}.
\]
For point sets $Y_1,\dots, Y_{d+1}$ in $\mathbb{R}^d$, we say that the sets $Y_1,\dots, Y_{d+1}$ have the \emph{same-type property} if, for every choice of points
$y_1 \in Y_1,\dots , y_{d+1} \in Y_{d+1}$, the orientation of the $(d+1)$-tuple $(y_1,\dots, y_{d+1})$ is the same.
More generally, for every integer $r \geq d+1$, we say that sets $Y_1,\dots, Y_r$ in $\mathbb{R}^d$ have the \emph{same-type property} if any $d + 1$ of them do.

With these definitions, we can now state the Same-Type lemma.

\begin{theorem}[Same-Type lemma~\cite{barVal98}]
\label{thm-sameTypeLemma}
For all integers $d \geq 2$ and $r \geq d+1$, there is a constant $c=c(d,r) >0$ such that for all pairwise disjoint sets $X_1,\dots, X_r$ in~$\mathbb{R}^d$ whose union is in general position,  there exist point sets $Y_1,\dots, Y_r$ having the same-type property and satisfying $Y_i \subseteq X_i$ and $|Y_i| \geq c|X_i|$.
\end{theorem}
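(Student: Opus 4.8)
The plan is to reduce the statement to the case $r=d+1$ and then to establish that base case by a bounded number of hyperplane cuts, each retaining a constant fraction of every set. The elementary observation used throughout is that the same-type property is \emph{hereditary}: if $Y_1,\dots,Y_r$ have the same-type property and $Y_i'\subseteq Y_i$ for every $i$, then so do $Y_1',\dots,Y_r'$, since the orientation of a $(d+1)$-tuple depends only on the points that are chosen.

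\textbf{Reduction to $r=d+1$.} Assume the theorem for $r=d+1$ with a constant $c_0=c(d,d+1)>0$. Given $X_1,\dots,X_r$, process the $\binom{r}{d+1}$ subsets $S\in\binom{[r]}{d+1}$ one after another; when $S$ is processed, apply the $(d+1)$-case to the current versions of the sets indexed by $S$ and replace each of them by the returned subset (of size at least $c_0$ times its current cardinality), leaving the remaining sets untouched. By heredity the same-type property secured for a previously processed subset is not destroyed by later steps, so after all $\binom{r}{d+1}$ steps every $(d+1)$-element subfamily has the same-type property, hence so does the whole family, and each set still has at least a $c_0^{\binom{r}{d+1}}$ fraction of its original points. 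Thus $c(d,r)=c_0^{\binom{r}{d+1}}$ works, and it remains to treat $r=d+1$.

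\textbf{The case $r=d+1$.} For $d=1$ this is immediate: $X_1,X_2$ lie on a line, take $t$ to be the median of $X_2$, note that one side of $t$ contains at least $|X_1|/2$ points of $X_1$ and the opposite side contains at least $|X_2|/2-1\ge|X_2|/3$ points of $X_2$ (the statement is vacuous for tiny sets), and these two subsets have the same-type property since every selected pair is ordered the same way; so $c(1,2)=1/3$. For $d\ge 2$ the aim is to find subsets $Y_i\subseteq X_i$, each a constant fraction of $X_i$, whose convex hulls ${\rm conv}(Y_1),\dots,{\rm conv}(Y_{d+1})$ are \emph{well separated}, meaning that for every partition $[d+1]=S\cup\bar S$ some hyperplane has $\bigcup_{i\in S}{\rm conv}(Y_i)$ strictly on one side and $\bigcup_{i\in\bar S}{\rm conv}(Y_i)$ strictly on the other. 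A well-separated family of convex sets behaves, combinatorially, like a family of points in convex position, so any transversal $(y_1,\dots,y_{d+1})$ with $y_i\in Y_i$ has a fixed orientation, i.e.\ the $Y_i$ have the same-type property. To produce well separation I iterate over the (constantly many, at most $2^{d}$ up to complementation) partitions $(S,\bar S)$, and for each of them I run a cutting procedure that separates $\bigcup_{i\in S}X_i$ from $\bigcup_{i\in\bar S}X_i$ while keeping a fixed fraction of \emph{every} $X_i$. This is where the Ham--Sandwich theorem (equivalently Borsuk--Ulam) is used: a bounded number of hyperplanes, each bisecting a suitably chosen subfamily of $d$ of the current sets, confine the sets into regions witnessing the desired separation, at the cost of a constant factor; by heredity the separations established for earlier partitions survive. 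After all partitions are handled the sets are well separated and each retains a constant fraction of its points, giving an explicit but doubly exponentially small (in $d$) positive constant $c(d,d+1)$.

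\textbf{The main obstacle.} Two ingredients carry the real weight. The first is the implication \emph{well separated $\Rightarrow$ same-type property}: one has to show that for a well-separated family, every transversal hyperplane ${\rm aff}(y_1,\dots,\widehat{y_i},\dots,y_{d+1})$ keeps the whole set ${\rm conv}(Y_i)$ on one side, which fixes the sign of the orientation determinant; this requires a careful separation argument. The second is the quantitative single-cut step: a plain coordinate projection can separate the $S$-part from the $\bar S$-part of the \emph{union} of the sets but need not keep a constant fraction of each individual $X_i$, so one genuinely needs the Ham--Sandwich theorem to split off a constant fraction of every set at once. The remaining parts --- the reduction to $r=d+1$, the one-dimensional base case, and the accounting of constants --- are routine. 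Since the statement is purely existential, no algorithmic considerations arise here.
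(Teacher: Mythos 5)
First, note that the paper does not prove this statement at all: it is quoted verbatim as a known result of B\'ar\'any and Valtr and used as a black box, so there is no in-paper proof to compare against. Your outline is, in substance, a reconstruction of the original B\'ar\'any--Valtr argument: reduce to $r=d+1$ by processing the $\binom{r}{d+1}$ index sets one at a time and invoking heredity of the same-type property; then, for $r=d+1$, achieve \emph{well separation} (a separating hyperplane for every partition of the $d+1$ sets into two complementary nonempty parts) by one Ham--Sandwich cut per partition, and conclude via the implication ``well separated $\Rightarrow$ same-type property.'' The reduction step, the heredity observation, and the bookkeeping of constants are all correct as written.

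The issue is that the two lemmas you yourself label as carrying ``the real weight'' are asserted rather than proved, and they are precisely where the content of the theorem lives; as it stands the proposal is a correct plan, not a proof. Both gaps are fillable along the lines you indicate. For the implication, suppose two transversals have opposite orientations; changing one coordinate at a time (all intermediate tuples are non-degenerate by general position) yields transversals differing only in the $i$-th point, $y_i$ and $y_i'$, lying on opposite sides of $H={\rm aff}(y_1,\dots,\widehat{y_i},\dots,y_{d+1})$. The segment $y_iy_i'\subseteq{\rm conv}(Y_i)$ meets $H$ in a point $z=\sum_{j\neq i}\lambda_j y_j$ with $\sum_j\lambda_j=1$; splitting the indices by the sign of $\lambda_j$ and moving the negative terms to the other side exhibits a common point of ${\rm conv}\bigl(\{z\}\cup\{y_j:\lambda_j<0\}\bigr)$ and ${\rm conv}\{y_j:\lambda_j>0\}$, contradicting well separation for the partition that puts $i$ with the negative indices. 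For the cut step, given a partition $(S,\bar S)$ of $[d+1]$, apply Ham--Sandwich to any $d$ of the current sets to get a hyperplane $h$ bisecting each of them; the omitted set $X_k$ has at least half its points in one open half-space, and orienting $h$ so that this majority side is the side assigned to the part containing $k$, you keep $X_i\cap h^-$ for $i\in S$ and $X_j\cap h^+$ for $j\in\bar S$, each of size at least roughly $(|X_i|-d)/2$ (at most $d$ points can lie on $h$ by general position). With these two arguments supplied, your proof is complete and coincides with the cited one.
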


B\'{a}r\'{a}ny and Valtr~\cite{barVal98} showed that $c(d,r) \geq (d+1)^{(-2^d-1)\binom{r-1}{d}}$.
This was later improved by Fox, Pach, and Suk~\cite{foxPachSuk16} to $c(d,r) \in 2^{-O(d^3r\log{r})}$.
Very recently, Bukh~\cite{Bukh2024} proved the bounds $d^{-50d^3}r^{-d^2} \leq c(d,r) \leq d^dr^{-d}$, which are polynomial in $r$.

For disjoint sets $P$ and $Q$ of points in the plane, we write $x(P)<x(Q)$ if the $x$-coordinate $x(p)$ of every point $p \in P$ is smaller than $x(q)$ for every point $q \in Q$.
For a positive integer $a$, we say that $a$ points in the plane form an \emph{$a$-cap} if they lie on the graph of some concave function.
Similarly, for $u \in \mathbb{N}$, a set of $u$ points in the plane forms a \emph{$u$-cup} if its points lie on the graph of some convex function. 
Note that any $a$-cap or $u$-cup is a point set in convex position.

\theoremNoncrVsCr* 

\begin{proof}
We set $C = 7 \cdot c(2,7)^{-1} \cdot c(2,5)^{-6}$, where $c(2,r) \in (0,1]$ is the constant from Theorem~\ref{thm-sameTypeLemma}. We assume $ k\geq 3$ as otherwise the statement is trivial.
Let $P$ be a set of $n= Ckm$ points in the plane in general position that does not contain a non-crossing family of size~$m$.
We partition $P$ by vertical lines into sets $P_1,\dots,P_7$ such that $x(P_1)<\cdots<x(P_7)$ and $|P_i| = n/7$ for every $i \in \{1,\dots,7\}$.
We now apply Theorem~\ref{thm-sameTypeLemma} to the 7-tuple $(P_1,\dots,P_7)$ and obtain the 7-tuple $\mathcal{B} = (P'_1,\dots,P'_7)$ 
that satisfies $P'_i \subseteq P_i$ and $|P'_i| \geq  c(2,7) \cdot n/7 \ge m$ for every $i \in \{1,\dots,7\}$.

We let $p_i$ be an arbitrary point from $P'_i$ for each $i \in \{1,\dots,7\}$.
We show that the set $S=\{p_1,\dots,p_7\}$ is in convex position.
Suppose for contradiction it is not.
Then, by Carath\'{e}odory's theorem, there is a 4-tuple $T$ of points from $S$ such that $T$ is not in convex position.
Since the collection $\mathcal{B}$ has the same-type property, the sets from $\mathcal{B}$ whose points are in $T$ form a non-crossing family of size $m$.
This contradicts our assumption that $P$ does not contain such a family.
Moreover, since $x(P'_1)<\dots<x(P'_7)$, it follows from Theorem~\ref{thm-sameTypeLemma} that there are sets $A$ and $U$ such that $A \cup U=\{1,\dots,7\}$, $A \cap U=\{1,7\}$, $\{p_i \colon i \in A\}$ is an $|A|$-cap, and $\{p_i \colon i \in U\}$ is a $|U|$-cup for every choice of $S=\{p_1,\dots,p_7\}$. 
By the pigeonhole principle, we have $|A| \geq 5$ or $|U| \geq 5$.
By symmetry, we can assume without loss of generality that $|A| \geq 5$ and we let $a_1<\dots<a_5$ be the 5 smallest elements of $A$.

Thus, we are done if $k \leq 5$, and we can assume $k \geq 6$.
We partition the set $P'_{a_3}$ by vertical strips into sets $P'_{a_3,1},\dots,P'_{a_3,k}$, each of size $|P'_{a_3}|/k$. 
Now, we let $Q_1 = P'_{a_1}$, $Q_2=P'_{a_2}$, $Q_{k-1} = P'_{a_4}$, $Q_k = P'_{a_5}$, and $Q_j = P'_{3,j}$ for every $j \in \{3,4,\dots,k-2\}$.
Observe that $x(Q_1)<\dots<x(Q_k)$ and $|Q_i| \geq c(2,7)\cdot n/(7k) \ge m$ for every $i \in [k]$.
Moreover, for every choice of $q_1 \in Q_1$, $q_2 \in Q_2$, $q_3 \in Q_3 \cup \cdots \cup Q_{k-2}$, $q_4 \in Q_{k-1}$, and $q_5 \in Q_k$, the points $q_1,\dots,q_5$ form a 5-cap.

Let $\pi$ be the permutation of $[k]$ defined by $\pi(i)=\lceil i/2 \rceil$ if $i$ is odd and $\pi(i)=k+1-i/2$ if $i$ is even.
Our goal is to show that for any $5 \leq \ell \leq k$, there exist large
sets  $R^\ell_i \subseteq Q_{\pi(i)}$ for $i\in[\ell]$ such that for all choices $r_i \in R^\ell_i$, the points $r_1,\dots,r_\ell$ form an $\ell$-cap; cf.~Figure~\ref{fig-convexBundle}. 
To this end, we first iteratively construct the sets $R^\ell_i$ such that any five sets $R^\ell_i, \dots, R^\ell_{i+4}$ fulfill the same-type property.  
Then, we reason about their sizes, and finally, we show by induction that they fulfill $\ell$-cap property.

\begin{figure}[ht]
    \centering
    \includegraphics{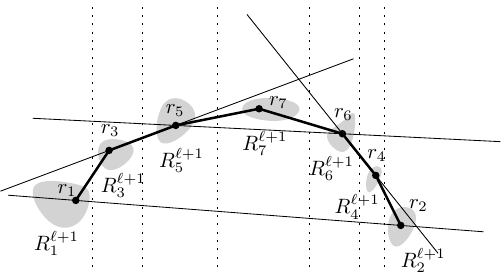}
    \caption{An illustration of the proof of Theorem~\ref{thm-noncrVsCr} for $\ell=6$.}
    \label{fig-convexBundle}
\end{figure}

For $\ell=5$, we simply set $R^\ell_i=Q_{\pi(i)}$ for each $i \in [\ell]$.
Note that these sets have the same-type property by construction, and that for all choices $r_i \in R^5_i$, the points $r_1,\dots,r_5$ form a $5$-cap.
We proceed by iteratively increasing $\ell$. 
Assume that we have sets $R^\ell_1,\dots,R^\ell_\ell$ for some integer $\ell$ with $5 \leq \ell < k$ such that $R^\ell_i \subseteq Q_{\pi(i)}$ for every $i \in [\ell]$ and such that any five sets $R^\ell_i, \dots, R^\ell_{i+4}$ fulfill the same-type property.  
We apply Theorem~\ref{thm-sameTypeLemma} to the 5-tuple $R^\ell_{\ell-3},R^\ell_{\ell-2},R^\ell_{\ell-1},R^\ell_{\ell},R^\ell_{\ell+1}=Q_{\pi(\ell+1)}$ ordered by $x$-coordinates.  
This way, we obtain sets $R^{\ell+1}_j \subseteq R^\ell_j$ for every $j \in \{\ell-3,\dots,\ell+1\}$ such that $|R^{\ell+1}_j| \geq c(2,5) \cdot |R^\ell_j|$ and the collection $R^{\ell+1}_{\ell-3},R^{\ell+1}_{\ell-2},R^{\ell+1}_{\ell-1},R^{\ell+1}_\ell,R^{\ell+1}_{\ell+1}$ has the same-type property.
For every $i \in [\ell-4]$, 
we set $R^{\ell+1}_i = R^\ell_i$.

Next, consider the sizes of the final sets $R^k_i$ for $i \in [k]$.
Note that we apply Theorem~\ref{thm-sameTypeLemma} at most five times within each set $Q_i$.
Thus, at the end we have, for every $i \in [k]$,
\[|R^{k}_i| \geq c(2,5)^5 \cdot |Q_{\pi(i)}| \geq c(2,5)^5 \cdot c(2,7)\cdot n/(7k) \geq m/c(2,5),\]
where we used the choice of $C$ and $n$.

Finally, we argue by induction on $\ell$ that 
for all choices $r_i \in R^{\ell}_i$, the points $r_1,\dots,r_{\ell}$ form an $\ell$-cap.
This is clearly true for $\ell = 5$ by the choice of $Q_1$, $Q_2$, $Q_3$, $Q_{k-1}$, and $Q_k$.
Thus, we assume $\ell \geq 5$ and proceed with the induction step $\ell \to \ell+1$.

Since $R^{\ell+1}_j \subseteq R^{\ell}_j$ for every $j \in [\ell]$, it follows from the induction hypothesis that the points $r_1,\dots,r_\ell$ form an $\ell$-cap for any choice of $r_i \in R^{\ell+1}_i$ with $i \in [\ell]$.
Thus, we only need to verify that $r_{\ell+1}$ extends this $\ell$-cap into an $(\ell+1)$-cap.

Since $x(R^{\ell+1}_{\ell-1})<x(R^{\ell+1}_{\ell+1})<x(R^{\ell+1}_{\ell})$ or $x(R^{\ell+1}_{\ell})<x(R^{\ell+1}_{\ell+1})<x(R^{\ell+1}_{\ell-1})$, all points from $R^{\ell+1}_{\ell+1}$ lie in a vertical strip $S$ between $R^{\ell+1}_{\ell-1}$ and $R^{\ell+1}_\ell$.
Since for every choice of $q_1 \in Q_1$, $q_2 \in Q_2$, $q_3 \in Q_3 \cup \cdots \cup Q_{k-2}$, $q_4 \in Q_{k-1}$, and $q_5 \in Q_k$, the points $q_1,\dots,q_5$ form a 5-cap, all points from $\cup_{i \in \{3,4,\dots,\ell+1\}}R^{\ell+1}_i$ lie above any line determined by one point from $R^{\ell+1}_1$ and one point from $R^{\ell+1}_2$.

Since the collection $R^{\ell+1}_{\ell-3},R^{\ell+1}_{\ell-2},R^{\ell+1}_{\ell-1},R^{\ell+1}_\ell,R^{\ell+1}_{\ell+1}$ has the same-type property, for every choice of one point from each set of this collection, the resulting 5-tuple is in convex position. 
Otherwise, similarly as before, it follows from Carath\'{e}odory's theorem that $P$ contains a non-crossing family of size $m/c(2,5) \geq m$, which is impossible by assumption. 
It follows from this fact that all points from $R^{\ell+1}_{\ell+1}$ lie below every line determined by a point from $R^{\ell+1}_{\ell-3}$ and a point from $R^{\ell+1}_{\ell-1}$ (otherwise there are four points from $R^{\ell+1}_{\ell-3}$, $R^{\ell+1}_{\ell-1}$, $R^{\ell+1}_{\ell+1}$, and $R^{\ell+1}_{\ell}$ that are not in convex position).
Similarly, all points from $R^{\ell+1}_{\ell+1}$ lie below every line determined by a point from $R^{\ell+1}_{\ell-2}$ and a point from $R^{\ell+1}_\ell$.
All points from $R^{\ell+1}_{\ell+1}$ lie on the same side of all lines determined by a point from $R^{\ell+1}_{\ell-1}$ and a point from $R^{\ell+1}_{\ell}$.
If they lie above all such lines, then $r_1,\dots,r_{\ell+1}$ form an $(\ell+1)$-cap, since $r_{\ell+1}\in S$ is contained in the triangle spanned by lines $\overline{r_{\ell-3}r_{\ell-1}}$, $\overline{r_{\ell-2}r_{\ell}}$, and $\overline{r_{\ell-1}r_{\ell}}$, and we are done.
Thus, we suppose that all points from $R^{\ell+1}_{\ell+1}$ lie below these lines.

Then, for every choice $r_1 \in R^{\ell+1}_1$, $r_2 \in R^{\ell+1}_2$, $r_{\ell-1} \in R^{\ell+1}_{\ell-1}$, $r_\ell \in R^{\ell+1}_\ell$, and $r_{\ell+1} \in R^{\ell+1}_{\ell+1}$, the points $r_1,r_2,r_{\ell-1},r_\ell,r_{\ell+1}$ are not in convex position.
This is because $r_{\ell+1}$ is located in the part of the vertical strip $S$ that lies between the two line segments $r_1r_2$ and $r_{\ell-1}r_\ell$.
We apply Theorem~\ref{thm-sameTypeLemma} to the 5-tuple $(R^{\ell+1}_{1},R^{\ell+1}_{2},R^{\ell+1}_{\ell-1},R^{\ell+1}_{\ell},R^{\ell+1}_{\ell+1})$ and obtain sets $S_j \subseteq R^{\ell+1}_j$ for every $j \in \{1,2,\ell-1,\ell,\ell+1\}$ such that $|S_j| \geq c(2,5) \cdot |R^{\ell+1}_j|$
and $(S_1,S_2,S_{\ell-1},S_\ell,S_{\ell+1})$ has the same-type property. 
By selecting a point from each $S_j$ we cannot get a point set in convex position, since no 5-tuple of points from $R^{\ell+1}_{1},R^{\ell+1}_{2},R^{\ell+1}_{\ell-1},R^{\ell+1}_{\ell},R^{\ell+1}_{\ell+1}$ is in convex position.
It follows from the same-type property and from Carath\'{e}odory's theorem that some 4-tuple of sets $S_j$ forms a non-crossing family.
This family has size $|S_j| \geq c(2,5) \cdot |R^{\ell+1}_j| \geq m$, which contradicts the fact that $P$ does not contain such a non-crossing family.

Altogether, the sets $R^k_1,\dots,R^k_k$ form a convex bundle of size $k$ and width at least $m$.
\end{proof}

To obtain the bound in Theorem~\ref{thm-noncrVsCrStronger}, 
we combine Theorem~\ref{thm-noncrVsCr} with Theorem~\ref{thm-natanBipartite} (the proof of which can be found in \cref{sec-partition}).

\theoremNoncrVsCrStronger*

\begin{proof} 
Let $C'=6C$, where $C$ is the constant from \cref{thm-noncrVsCr}, 
and let $P$ be a set of $n \geq C'm$ points in general position that does not contain a non-crossing family of size $m$.
Further, let $k= \lfloor n/(2Cm) \rfloor \geq 3$, and note that $2kCm \le n < 2(k+1)Cm \le  \frac{8}{3}kCm$. 
It follows from \cref{thm-noncrVsCr} applied with $2k$ that $P$ contains a convex bundle of size $2k$ and width $m$. We denote the sets which form this convex bundle by $A_1,A_2,\dots, A_k, B_1, B_2,\dots,B_k$ in cyclical order given by the convex position.  

For every $i \in [k]$, we apply Theorem~\ref{thm-natanBipartite} to the set $A_i \cup B_i$ and obtain a crossing family $\mathcal{F}_i$ of size at least $m/2^{c\sqrt{\log{m}}}$ for some constant $c>0$ such that each of its line segments has one endpoint in $A_i$ and the other in $B_i$, where the constant $c$ does not depend on the cardinality $m$ of the point sets. 

By the definition of the convex bundle, each line segment from $\mathcal{F}_i$ crosses each line segment from $\mathcal{F}_j$ for all $i$ and $j$ with $1 \leq i < j \leq k$.
It follows that the line segments in $\cup_{i=1}^k \mathcal{F}_i$ form a crossing family of size
	\[\frac{km}{2^{c\sqrt{\log m}}} \ge  \frac{3}{8C}\cdot \frac{n}{2^{c\sqrt{\log m}}} \in \frac{n}{2^{O(\sqrt{\log m})}},\]
which completes the proof.
\end{proof}

\section{Proof of Theorem~\ref{thm-natanBipartite}}\label{sec-partition}

In this section, we show that every set of $n$ points in the plane in general position partitioned into two separated nonempty subsets $P_1$ and $P_2$ of equal size 
contains a crossing family $\mathcal{F}$ of size at least $n/2^{O(\sqrt{\log{n}})}$, where each segment from $\mathcal{F}$ has one endpoint in~$P_1$ and one endpoint in $P_2$.
We do so by slightly modifying the proof of Theorem~\ref{thm-natan} by Pach, Rubin, and Tardos~\cite{prt21}.

Let $\mathcal{L}$ be a finite set of lines in the plane.
Following the terminology in~\cite{prt21}, we call the partition $\mathcal{A} = \mathcal{A}(\mathcal{L})$ of $\mathbb{R}^2 \setminus \left(\bigcup \mathcal{L}\right)$ into 2-dimensional sets, called \emph{cells}, the \emph{arrangement}\footnote{We remark that in other literature, the line arrangement formed by a set $\mathcal{L}$ of lines also contains $\mathcal{L}$, partitioned into vertices, which are the crossings of $\mathcal{L}$, and edges, which are the (possibly unbounded)
segments of lines of $\mathcal{L}$ between consecutive crossings.} of lines from $\mathcal{L}$. 
Each cell of $\mathcal{A}$ is a maximal connected
region of $\mathbb{R}^2 \setminus \left(\bigcup \mathcal{L}\right)$ and a (possibly unbounded) convex polygon.
The boundary of each cell consists of \emph{edges}, which are portions of the lines from $\mathcal{L}$.
The edges connect crossings between lines from $\mathcal{L}$, which are called \emph{vertices}.
The \emph{zone} of a line $\ell \not\in \mathcal{L}$ in the arrangement $\mathcal{A}$ is the union of all the open cells whose closure is intersected by $\ell$.

We will need the following result on point sets and line arrangements, which was implicitly established by Matou\v{s}ek~\cite{mat02,Matousek91socg}; see~\cite{prt21} for a proof.

\begin{lemma}[\cite{mat02}]
\label{lem-cuttingLemma}
For any $n$-element point set $P$ in the plane in general position and for any $\varepsilon > 0$, there exists a set $\mathcal{L}$ of 
$O(1/\varepsilon^2)$ lines such that the zone of any 
	line $\ell \notin \mathcal{L}$ within 
	the arrangement $\mathcal{A}(\mathcal{L})$  contains at most $\varepsilon n$ points of $P$. 
\end{lemma}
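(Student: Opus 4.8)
The plan is to derive the lemma from Matou\v{s}ek's simplicial partition theorem, which states that for every $n$-point set in the plane and every $r$ with $1\le r\le n$, the set can be partitioned into $t=\Theta(r)$ parts $P_1,\dots,P_t$, each of size $\Theta(n/r)$, together with (possibly overlapping and possibly unbounded) triangles $\Delta_1,\dots,\Delta_t$ satisfying $P_i\subseteq\Delta_i$, such that every line in the plane meets at most $O(\sqrt{r})$ of the triangles $\Delta_i$.

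First I would fix $r=\Theta(1/\varepsilon^{2})$ with a sufficiently large implied constant, apply the simplicial partition theorem to $P$, and let $\mathcal{L}$ consist of the at most $3t$ lines supporting the edges of $\Delta_1,\dots,\Delta_t$; after an arbitrarily small perturbation we may assume that $P$ avoids every line of $\mathcal{L}$ and that $\mathcal{L}$ is in general position. This already gives $|\mathcal{L}|=O(t)=O(1/\varepsilon^{2})$, as required. The crucial observation is that every cell of $\mathcal{A}(\mathcal{L})$ is contained in one of the triangles $\Delta_i$: the cell containing a point $p\in P_i$ lies on the same side as $p$ of every line of $\mathcal{L}$, in particular on the interior side of each of the three edges of $\Delta_i$, and is therefore contained in $\Delta_i$. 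Hence, writing $C_p$ for the cell of $\mathcal{A}(\mathcal{L})$ containing a point $p\in P_i$, we have $\overline{C_p}\subseteq\Delta_i$.

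Now I would fix an arbitrary line $\ell\notin\mathcal{L}$. If a point $p\in P_i$ lies in the zone of $\ell$, then $\ell$ meets $\overline{C_p}$, hence $\ell$ meets $\Delta_i$; consequently the number of points of $P$ in the zone of $\ell$ is at most $\sum_{i\,:\,\ell\cap\Delta_i\neq\emptyset}|P_i|\le O(\sqrt{r})\cdot O(n/r)=O(n/\sqrt{r})$, which is at most $\varepsilon n$ once the constant in $r=\Theta(1/\varepsilon^{2})$ is chosen large enough. The step I expect to be the real issue is not this final estimate but the choice of the right partitioning tool: the tempting route of picking $\mathcal{L}$ so that merely every \emph{cell} of $\mathcal{A}(\mathcal{L})$ is light and then invoking the zone theorem (a zone consists of $O(|\mathcal{L}|)$ cells) fails, because multiplying the number of cells in a zone by the maximum weight of a cell only yields the useless bound $\Theta(n)$; one genuinely needs the ``few crossed regions'' guarantee of a simplicial partition, which is also exactly what forces the exponent in $|\mathcal{L}|=O(1/\varepsilon^{2})$, since we need $\sqrt{r}\cdot n/r\le\varepsilon n$. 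Minor general-position subtleties (lines $\ell$ passing through a vertex of $\mathcal{A}(\mathcal{L})$, for which the zone picks up $O(1)$ extra cells per incident vertex) are handled by a routine perturbation or limiting argument. Since simplicial partitions can be computed in near-linear time, the resulting proof is moreover constructive.
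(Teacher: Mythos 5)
Your proof is correct and follows essentially the same route as the one the paper points to (the paper only cites this lemma, referring to Pach, Rubin, and Tardos for a proof, which likewise derives it from Matou\v{s}ek's simplicial partition theorem by taking the lines supporting the edges of the partition triangles). Your observation that a na\"ive ``light cells plus zone theorem'' argument only gives $\Theta(n)$ is also the right reason why the partition theorem's crossing-number guarantee is genuinely needed here.
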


The following definitions were introduced by Pach, Rubin, and Tardos~\cite{prt21}.
For non-empty separated point sets $A$ and $B$, we define a binary relation $<_B$ on $A$ such that, for distinct points $x,y \in A$, we let $x <_B y$ if $B$ is contained in the open half-plane to the left of the line determined by $x$ and $y$ and oriented from $x$ towards $y$.
Pach, Rubin, and Tardos~\cite{prt21} proved that  $<_B$ is a partial order on~$A$, in which two distinct points $x$ and $y$ are incomparable if and only if the line through $x$ and $y$ intersects ${\rm conv}(B)$.

For a partially ordered set $(S,<_S)$, we use $\iota(S, <_S)$ to denote the number of pairs of elements from $S$ that are incomparable in $<_S$.
For $\varepsilon>0$, two separated point sets $A$ and $B$, each with $m$ points,
are said to form an \emph{$\varepsilon$-avoiding} pair if 
$\iota(A, <_B) + \iota(B, <_A) \leq \varepsilon m^2.$

The following lemma is a variant of Lemma 3.3 by Pach, Rubin, and Tardos~\cite{prt21}. Its proof is a modification of their argument.

\begin{lemma}
\label{lem-lem3.3}
There is an absolute constant $c > 0$ with the following property. 
For every positive integer $m$ and every real number $\varepsilon \in (0,1)$, if $P$ is a set of $n$ points in the plane in general position such that $n \geq \frac{cm}{\varepsilon^4}$ and $P$ is partitioned into two separated subsets $P_1$ and $P_2$ with $||P_1|-|P_2||\leq 1$, 
then there are two (separated) 
$m$-element sets $A \subseteq P_1$ and $B \subseteq P_2$ that form an $\varepsilon$-avoiding pair.
\end{lemma}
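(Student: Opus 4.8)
The goal is to find, inside a large point set $P = P_1 \cup P_2$ with $P_1$, $P_2$ separated and of (nearly) equal size, two $m$-element subsets $A \subseteq P_1$ and $B \subseteq P_2$ that form an $\varepsilon$-avoiding pair, i.e.\ with very few incomparable pairs in both partial orders $<_B$ and $<_A$. The plan is to follow the strategy of Pach, Rubin, and Tardos, adapting it to the bipartite setting. The starting observation is the characterization of incomparability: two points $x, y \in A$ are incomparable in $<_B$ precisely when the line through $x$ and $y$ meets ${\rm conv}(B)$. So, to make a pair $\varepsilon$-avoiding, we want $A$ and $B$ to be ``far apart'' in the sense that very few lines spanned by points of one set stab the convex hull of the other.

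\textbf{Key steps.}
First I would apply the cutting lemma (\cref{lem-cuttingLemma}) to the whole point set $P$ with a suitable parameter $\varepsilon' = \Theta(\varepsilon)$, obtaining a set $\mathcal{L}$ of $O(1/\varepsilon'^2)$ lines such that the zone of any line not in $\mathcal{L}$ contains at most $\varepsilon' n$ points of $P$. The arrangement $\mathcal{A}(\mathcal{L})$ has $O(1/\varepsilon'^4)$ cells; since $n$ is large (this is where the hypothesis $n \geq cm/\varepsilon^4$ is used), by pigeonhole some cell $\sigma_1$ contains at least $|P_1| / O(1/\varepsilon'^4) \geq$ (roughly) $\varepsilon'^4 n / O(1)$ points of $P_1$, and likewise some cell $\sigma_2$ contains many points of $P_2$; after choosing the constant $c$ appropriately, each of these counts is at least $m$. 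If $\sigma_1 = \sigma_2$, pick the cell with the larger of the two counts and note that one of the two parts still contributes $\geq m$ points there, but to keep $A$ and $B$ separated we actually want them in different cells — so I would argue (using that $P_1$ and $P_2$ are separated by a vertical line, say) that we may pick $\sigma_1 \neq \sigma_2$, or alternatively split a single heavily-populated cell by the separating line. Let $A$ be an arbitrary $m$-subset of $P_1 \cap \sigma_1$ and $B$ an arbitrary $m$-subset of $P_2 \cap \sigma_2$.

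\textbf{Counting incomparable pairs.}
Now I bound $\iota(A, <_B)$. A pair $x, y \in A$ is incomparable iff the line $\overline{xy}$ crosses ${\rm conv}(B) \subseteq \sigma_2$. Fix a point $y \in A \subseteq \sigma_1$. Every point $x \in A$ with $\overline{xy}$ meeting $\sigma_2$ lies in the ``wedge'' from $y$ through $\sigma_2$; but $\sigma_2$ is a single cell of the arrangement, and any line through $y$ that meets $\sigma_2$ lies entirely in the zone of... — more carefully, I would use that $\sigma_1$ and $\sigma_2$ are cells, and that a line hitting both is confined to a thin region whose zone is controlled: the set of points $x$ for which $\overline{xy}$ stabs the fixed convex region $\sigma_2$ is contained in the zone (with respect to $\mathcal{A}(\mathcal{L})$) of some line through $\sigma_2$ (e.g.\ a line supporting $\sigma_2$ suitably chosen), which by \cref{lem-cuttingLemma} contains at most $\varepsilon' n$ points of $P$, hence at most $\varepsilon' n$ points of $A$. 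Summing over the $m$ choices of $y$ gives $\iota(A, <_B) \leq m \cdot \varepsilon' n$. This is still too large against the target $\varepsilon m^2$; the fix (as in \cite{prt21}) is to re-apply the cutting argument recursively, or to first prune $A$ and $B$ down so that the ambient point count near them is comparable to $m$ rather than $n$ — concretely, iterate: within cell $\sigma_1$ apply the cutting lemma again to the $\Theta(\varepsilon'^4 n)$ points there, descend to a subcell, and repeat $O(\log(n/m) / \log(1/\varepsilon'))$ times until the relevant cell holds $\Theta(m/\varepsilon'^{O(1)})$ points, at which point the zone bound reads $\varepsilon' \cdot O(m/\varepsilon'^{O(1)}) = O(m/\varepsilon'^{O(1)-1})$ per point, summing to $O(m^2/\varepsilon'^{O(1)-1})$; rescaling $\varepsilon'$ absorbs the constant into $\varepsilon$. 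The symmetric bound handles $\iota(B, <_A)$, and adding the two gives an $\varepsilon$-avoiding pair.

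\textbf{Main obstacle.}
The delicate part is precisely this last estimate: a single application of the cutting lemma only gives a bound of the shape $\varepsilon' m n$ on $\iota(A,<_B)$, which is off by a factor of $n/m$ from what we need. Getting down to $\varepsilon' m^2$ requires either the recursive/iterated cutting as sketched above, or a cleverer one-shot choice of the cells $\sigma_1, \sigma_2$ that simultaneously keeps $\geq m$ points in each and keeps the two cells mutually ``zone-light'' — essentially choosing cells that are close together and small, so that lines spanned within one cell and stabbing the other are rare relative to the \emph{local} point count. Ensuring that $A$ and $B$ remain separated throughout (so that $<_B$ and $<_A$ are genuinely partial orders) while carrying out this descent is the bookkeeping one has to be careful with; since $P_1$ and $P_2$ are separated to begin with, choosing $\sigma_1$ on the $P_1$-side and $\sigma_2$ on the $P_2$-side of the separating line preserves separation at every level of the recursion, so this is manageable. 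The precise exponents of $\varepsilon$ in the constant $c$ come out of this recursion and I would not chase them beyond confirming they are absolute constants independent of $m$ and $\varepsilon$.
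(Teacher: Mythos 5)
Your proposal diverges from the paper's argument in a way that leaves a real gap in the central estimate, and the recursion you propose to repair it does not address the actual problem.

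The crux: after applying the cutting lemma, you \emph{pick} two heavily-populated cells $\sigma_1$ and $\sigma_2$ by pigeonhole, take arbitrary $m$-subsets $A$ and $B$ inside them, and try to bound $\iota(A,<_B)$ directly. This cannot work for an arbitrary pair of heavy cells: the number of lines spanned inside one cell that stab the other depends on the relative geometry of the two cells (and of $\mathrm{conv}(B)$), and for a bad choice it can be $\Theta(m^2)$ rather than $\varepsilon m^2$. No amount of re-cutting within $\sigma_1$ fixes this, because the obstruction is the mutual position of $\sigma_1$ and $\sigma_2$, not the internal structure of $\sigma_1$. Moreover, the specific zone claim you use — that the set of points $x$ with $\overline{xy}$ stabbing $\sigma_2$ lies in the zone of ``some line through $\sigma_2$'' — is false: that set is a double wedge with apex at $y$, and a double wedge is not contained in the zone of a single line in general. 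The correct zone observation is the dual one (if $\overline{xy}$ hits $\sigma_2$, then $\sigma_2$ lies in the zone of $\overline{xy}$), and it must be used globally, not per-point.

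The paper's proof is structured around exactly the ``cleverer choice'' you gesture at in your last paragraph but do not carry out, and it is not a choice at all but an averaging argument. After building the arrangement $\mathcal{A}(\mathcal{L})$, the cells are \emph{subdivided} by parallel segments into pieces each containing exactly $m$ points (``clusters''); one then considers \emph{all} pairs of clusters $(C_1,C_2)$ with $C_1\subseteq P_1$ and $C_2\subseteq P_2$, of which there are $\geq n^2/(8m^2)$, and double-counts the quantity $X = \sum_{(C_1,C_2)}\bigl(\iota(C_1,<_{C_2})+\iota(C_2,<_{C_1})\bigr)$ by point pairs: a pair $\{x,y\}$ from a cluster contributes only to the clusters lying in the zone of $\overline{xy}$, of which there are at most $\varepsilon n/(16m)$. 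This gives $X < \varepsilon n^2/32$, hence fewer than $n^2/(32m^2)$ non-$\varepsilon$-avoiding pairs, and a good pair exists by pigeonhole. Your sketch has the right ingredients (the characterization of incomparability, the cutting lemma, the need for separated cells on opposite sides of the separating line) but is missing the cluster subdivision and the global double-counting step, which is where the lemma is actually proved.
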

\begin{proof}
We first apply Lemma~\ref{lem-cuttingLemma} to obtain a set $\mathcal{L}$ of $r=c_0/\varepsilon^2$ lines for some constant $c_0>0$ and an arrangement $\mathcal{A} = \mathcal{A}(\mathcal{L})$ such that the zone of any other line within $\mathcal{A}$ contains at most $\varepsilon n/16$ points of $P$.
We assume that $r \geq 3$ and choose a constant $c$ such that $c \geq 16c^2_0$.

We split each cell of $\mathcal{A}$ with parallel segments or half-lines that do not pass through any point of $P$ into smaller cells such that all but at most one of them (which we will call \emph{exceptional}) contain precisely
$m$ points of $P$, and if there is an exceptional cell, it contains fewer than $m$ points.
Since there is at most one exceptional (smaller) cell per cell of $\mathcal{A}$,  there are at most $\frac{r^2+r+1}{2} < r^2$ exceptional cells in total.
Let $\Pi$ be the resulting cell decomposition of $\mathbb{R}^2$ induced by $\mathcal{A}$ and the additional segments and half-lines.
Note that every cell in $\Pi$ is convex.
We call the set of $m$ points of $P$ inside a non-exceptional cell of $\Pi$ a \emph{cluster}.
Let $D$ be the number of clusters and note that $D \leq n/m$.

Let $\ell$ be a line that separates $P_1$ from $P_2$.
Such a line exists as $P_1$ and $P_2$ are separated.
By the choice of $\mathcal{A}$, the zone of $\ell$ in $\mathcal{A}$ contains at most $\varepsilon n/16$ points of $P$.
These points lie in at most $\varepsilon n/(16m)$ clusters, as each such cluster contains $m$ points from the zone of $\ell$ in~$\mathcal{A}$.

There are at most $r^2m$ points of $P$ that do not belong to any cluster as the number of exceptional (smaller) cells is at most $r^2$ and each such a cell contains less than $m$ points of~$P$.
Thus, since $|P_1|=n/2$, the number of clusters that contain points from $P_1$ is at least $(n/2-r^2 m)/m$, and an analogous claim is true for $P_2$.

The number of pairs $(C_1,C_2)$ of clusters $C_1$ and $C_2$ such that $C_1  \subseteq P_1$ and $C_2   \subseteq P_2$ is at least 
\begin{align*}
\left(\frac{n/2 - r^2m}{m}\right)^2 - \frac{\varepsilon n}{16m}D &\geq \frac{(n/2-c_0^2m/\varepsilon^4)^2}{m^2} - \frac{\varepsilon n^2}{16m^2} \\
&\geq   \frac{(n/2-n/16)^2}{m^2} - \frac{n^2}{16m^2} > \frac{n^2}{8m^2},
\end{align*}
as every cluster containing points from $P_1$ and $P_2$ is in the zone of $\ell$ and the number of exceptional cells is at most $r^2$.
We also used the choice of $n$, $r=c_0/\varepsilon^2$, $\varepsilon < 1$, $D \leq n/m$, $n \geq cm/\varepsilon^4$, and $c \geq 16 c_0^2$.

Let 
\[X = \sum_{(C_1,C_2)}(\iota(C_1,<_{C_2}) + \iota(C_2,<_{C_1})),\]
where the sum is taken over all ordered pairs $(C_1,C_2)$ of distinct clusters such that $C_1 \subseteq P_1$ and $C_2  \subseteq P_2$.

We now estimate $X$ according to the point pairs that are incomparable with respect to a cluster.
An unordered pair of distinct points $\{x, y\} \in \binom{P}{2}$ contributes to $X$ only if $x$ and
$y$ come from the same cluster whose points from $P$ are contained in $P_1$ or $P_2$.
In this case, the contribution of $\{x, y\}$ is the number of other clusters whose points from $P$ are contained in $P_2$ or~$P_1$, respectively, 
and whose convex hulls are crossed by the line $\ell'$ containing $x$ and $y$. 
All of these clusters belong
to the zone of $\ell'$ in the arrangement $\mathcal{A}$. 
By the choice of $\mathcal{A}$, the zone of any
line contains at most $\varepsilon n/16$ vertices. Thus, the contribution of any pair of points is at
most $\varepsilon n/(16m)$ and we obtain
\[
X \leq D \binom{m}{2} \frac{\varepsilon n}{16m} < \frac{\varepsilon n^2}{32},
\]
since $D \leq n/m$.

On the other hand, every pair of clusters $(C_1,C_2)$ which is not $\varepsilon$-avoiding contributes more than $\varepsilon m^2$ to $X$.
Therefore, there are at most $X/(\varepsilon m^2)$ such pairs,
implying that the number of not $\varepsilon$-avoiding pairs of clusters is less than
\[\frac{\varepsilon n^2}{32} \cdot \frac{1}{\varepsilon m^2} = \frac{n^2}{32m^2}.\]
Clearly, each cluster has $m$ points of $P$ and any two of them are separated. 

Putting everything together, the number of pairs $(C_1,C_2)$ of clusters such that $C_1  \subseteq P_1$ and $C_2  \subseteq P_2$ is at least $\frac{n^2}{8m^2}$, which is larger than the number $\frac{n^2}{32m^2}$ of pairs of clusters that are not $\varepsilon$-avoiding.
Thus, there is a pair of clusters $(A,B)$ that is $\varepsilon$-avoiding and satisfies $A \subseteq P_1$ and $B  \subseteq P_2$.
\end{proof}

Finally, we state the following lemma by Pach, Rubin, and Tardos~\cite{prt21}.

\begin{lemma}[\cite{prt21}]
\label{lem-lem3.7}
Let $s$ be a positive integer and set $K = 8^{\binom{s}{2}}$, $M = 9^s K$, $\varepsilon = 2^{-3s-11}$.
Assume that $A$ and $B$ are $M$-element point sets that form an $\varepsilon$-avoiding pair and $A\cup B$ is in general position.
Then, we can find $K$ pairwise crossing segments, each connecting a point of $A$ to a point of $B$.
\end{lemma}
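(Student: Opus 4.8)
The plan is to follow the recursive strategy of Pach, Rubin, and Tardos~\cite{prt21} and argue by induction on $s$. The base case $s=1$ is immediate: here $K=8^{0}=1$ and any single segment joining a point of $A$ to a point of $B$ is a crossing family of size~$1$. For the inductive step I pass from the parameter $s-1$ to $s$. Writing $K_t=8^{\binom t2}$, $M_t=9^tK_t$, and $\varepsilon_t=2^{-3t-11}$, one records the identities $K_s=8^{\,s-1}K_{s-1}$, $M_s=9\cdot8^{\,s-1}M_{s-1}$, and $\varepsilon_{s-1}=8\varepsilon_s$. Thus we are permitted to inflate the point count by a factor $9\cdot8^{\,s-1}$ in order to produce $8^{\,s-1}$ sub-configurations handled by the induction hypothesis, the avoidance parameter is even allowed to grow by a factor $8$ along the way, and a little slack (the factor~$9$) is left over for discarding imperfect parts.

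It is useful to first fix the geometric picture. By the characterisation of $<_B$, a pair of points of $A$ is incomparable exactly when the line through them meets ${\rm conv}(B)$; hence an $\varepsilon_s$-avoiding pair $(A,B)$ is one in which, outside an $\varepsilon_s$-fraction of ``defective'' pairs, $A$ is linearly ordered by $<_B$ and, symmetrically, $B$ by $<_A$ --- intuitively, $A\cup B$ is $\varepsilon_s$-close to being in convex position, with ${\rm conv}(B)$ seen inside a thin angular wedge from a typical point of $A$ and vice versa. One cannot conclude directly, because closeness to convex position is not by itself enough: if $A\cup B$ were genuinely in convex position with the $2t$ points in cyclic order $a_1,\dots,a_t,b_1,\dots,b_t$, then the diagonal matching $\{a_ib_i\}$ would already be a crossing family of size $t$ (for $i<j$ the two endpoints of $a_jb_j$ lie on opposite arcs of the hull cut off by $a_ib_i$), but the $\varepsilon_s$-fraction of defective pairs destroys exactly this, and the recursion is the mechanism that absorbs them.

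The inductive step then proceeds in three stages. First, using the $\varepsilon_s$-avoiding hypothesis --- and, where convenient, \cref{lem-cuttingLemma} applied to $A$ and to $B$ --- I extract a large ``regular'' sub-pair $(A^\ast,B^\ast)$ on which the residual orders $<_{B^\ast}$ and $<_{A^\ast}$ behave like total orders except for a tightly controlled, short-range set of exceptions. Second, I sweep: cut $B^\ast$ with lines into consecutive blocks $B_1,\dots,B_N$ taken in the order of a linear extension of $<_{A^\ast}$, cut $A^\ast$ analogously into $A_1,\dots,A_N$ in the order of $<_{B^\ast}$, and pair up the blocks in the unique way that arranges the $2N$ blocks into a convex-bundle-type cyclic pattern, so that every $A_i$--$B_i$ segment crosses every $A_j$--$B_j$ segment for all $i\ne j$. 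The crucial point is that making the blocks small shrinks every ${\rm conv}(B_j)$ and every ${\rm conv}(A_j)$, and this destroys almost all incomparable pairs inside a block; consequently, although each block pair is a factor $\approx 9\cdot8^{\,s-1}$ smaller (so that a fixed incomparability count gets amplified), one can still guarantee each $(A_j,B_j)$ to be $\varepsilon_{s-1}$-avoiding, and the regular clean-up together with a mild over-sampling of the blocks lets us throw away the few spoiled ones and retain $8^{\,s-1}$ good block pairs. Third, applying the induction hypothesis to each of these $8^{\,s-1}$ pairs yields $K_{s-1}$ pairwise crossing $A_j$--$B_j$ segments inside each; by the cross-block crossing property, their union is a crossing family of size $8^{\,s-1}K_{s-1}=K_s$, and every segment joins a point of $A$ to a point of $B$.

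The hardest part will be the geometric core of the sweep: arranging the block pairing so that (a)~segments of distinct block pairs always cross --- which ties the pairing of the $A$-blocks with the $B$-blocks to the global angular order of the configuration --- while simultaneously (b)~each block pair is again $\varepsilon_{s-1}$-avoiding --- which wants the blocks as fine as possible and thus conflicts with the rigidity imposed by~(a). Equally delicate is the bookkeeping that keeps the $\varepsilon_s$-fraction of defective pairs under control through both the passage to $(A^\ast,B^\ast)$ and the block decomposition, so that no retained block is spoiled and the avoidance parameter degrades by at most the allotted factor~$8$; the complete execution of these estimates is carried out in~\cite{prt21}.
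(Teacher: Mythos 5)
First, note that the paper does not prove this lemma at all: it is imported verbatim as a black box from Pach, Rubin, and Tardos~\cite{prt21}, so there is no in-paper proof to compare against. Judged on its own terms, your proposal correctly identifies the recursive skeleton of the PRT argument and gets the parameter arithmetic right ($K_s=8^{s-1}K_{s-1}$, $M_s=9\cdot 8^{s-1}M_{s-1}$, $\varepsilon_{s-1}=8\varepsilon_s$, and the trivial base case $s=1$), but it is an outline rather than a proof: you explicitly defer ``the complete execution of these estimates'' to~\cite{prt21}, and the two steps you defer are precisely where all the difficulty of the lemma lives.

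Concretely, the gaps are these. (a) The claim that one can cut $A^\ast$ and $B^\ast$ into blocks, pair them up, and have \emph{every} segment of one block pair cross \emph{every} segment of another is not established; it requires showing that the avoiding hypothesis forces a near-convex ``comb'' structure on long chains of $<_{B}$ and $<_{A}$, and the pairing of $A$-blocks with $B$-blocks must be derived from that structure, not merely from linear extensions of the two partial orders. (b) The assertion that one can retain $8^{s-1}$ block pairs each of which is $\varepsilon_{s-1}$-avoiding does not follow from the averaging you gesture at. If one cuts into $N\approx 9\cdot 8^{s-1}$ blocks of size $M/N$ and bounds the number of bad diagonal pairs by comparing the total incomparability budget $\varepsilon_s M^2$ against the per-block threshold $\varepsilon_{s-1}(M/N)^2$, one gets at most $(\varepsilon_s/\varepsilon_{s-1})N^2=N^2/8$ bad pairs --- which exceeds $N$ for $N>8$, so the naive count cannot guarantee that even a single good block pair survives. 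Your intuition that shrinking ${\rm conv}(B_j)$ makes more pairs of $A_j$ comparable under $<_{B_j}$ is the right one, but turning it into the required quantitative statement is the heart of the PRT proof and is absent here. As it stands, the proposal is a faithful summary of the strategy of the cited proof, not a proof.
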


We are now ready to prove Theorem~\ref{thm-natanBipartite} by slightly modifying the proof of Theorem~\ref{thm-natan} by Pach, 
Rubin, and Tardos~\cite{prt21}.

\theoremNatanBipartite*

\begin{proof} 
Assuming $n \geq 3$, let $s$ be the smallest positive integer such that $P$ does not determine a crossing family of size $K=8^{\binom{s}{2}}$ that has all line segments between $P_1$ and $P_2$.
We have $s\geq 1$, since $P$ determines a crossing family of size $1=8^{\binom{1}{2}}$.

By Lemma~\ref{lem-lem3.7}, there is no pair $\{A,B\}$ of sets, with $A$ containing $M=9^sK$ points of $P_1$ and $B$ containing $M$ points of $P_2$, that is $\varepsilon= 2^{-3s-11}$-avoiding.
Applying Lemma~\ref{lem-lem3.3} to $P$ gives $n \in O(M\varepsilon^{-4}) =2^{O(s)}K$.

By the choice of $s$, the point set $P$ determines a crossing family of size $K' = 8^{\binom{s-1}{2}} \in K/2^{O(s)}$  with all line segments between $P_1$ and $P_2$.
Therefore, we have $n>K' \in K/2^{O(s)}$ and $s=O(\sqrt{\log {n}})$.
We also have $K' \in K/2^{O(s)} = n/2^{O(s)} = n/2^{O(\sqrt{\log{n}})}$.
\end{proof}

\section{Proof of Theorem~\ref{thm-timeComplexity}}\label{sec-timeComplexity}
This section is devoted to the algorithmic aspects of~\cref{thm-noncrVsCr,thm-noncrVsCrStronger}.
Specifically, we prove that there is a constant $C>0$ such that for all positive integers $k$ and $m$ if $P$ is a set of $n=Ckm$ points in the plane in general position, we can find a convex bundle of size $k$ and width $m$ or a non-crossing family of size $m$ in time $O(n)$.
Similarly, we show that we can find a crossing family of size $n/2^{O(\sqrt{\log{m}})}$ or a non-crossing family of size $m$ in time $O(nm^{1+O((\log{m})^{-1/3})})$.

To do so, we use a variant of the semi-algebraic regularity lemma recently proved by Rubin~\cite{rubin24}.
To state it, we first need to introduce some more definitions.

A Boolean function $\psi \colon \mathbb{R}^{d \times r} \to \{0, 1\}$ is an \emph{$r$-wise semi-algebraic relation} in~$\mathbb{R}^d$ if it can be described by a finite combination $(f_1,\dots, f_s, \phi)$, where $f_1,\dots, f_s \in \mathbb{R}[z_1,\dots, z_{rd}]$ are polynomials, and $\phi$ is a Boolean function in $\{0, 1\}^s \to \{0, 1\}$, so that 
\[\psi(y_1,\dots, y_r) =
\phi(f_1(y_1,\dots, y_r) \leq 0; \dots ; f_s(y_1, \dots, y_r) \leq 0)\] holds for all the ordered $r$-tuples $(y_1,\dots, y_r)$ of points $y_i \in \mathbb{R}^d$.
We call the $(s+1)$-tuple $(f_1,\dots , f_s, \phi)$ a \emph{semi-algebraic description of $\psi$} (which need not be unique).
Such a relation $\psi$ has \emph{description complexity at most $(\Delta, s)$} if it admits a description using at most $s$ polynomials $f_i \in \mathbb{R}[z_1,\dots, z_{rd}]$ of maximum degree at most~$\Delta$.
We let $\Psi_{d,r,\Delta,s}$ be the family of all such $r$-wise semi-algebraic relations $\psi \colon \mathbb{R}^{d \times r} \to \{0, 1\}$ with description complexity bounded by~$(\Delta, s)$.

\begin{theorem}[\cite{rubin24}]
\label{thm-semialgebraicRegularityPartite}
The following statement holds for any fixed integers $d \geq 1$, $r \geq 2$, $\Delta \geq 0$, $s \geq 1$, and any fixed $\delta > 0$.
Let $X_1\dots, X_r$ be sets of $n$ points in $\mathbb{R}^d$, and $\psi$ be an $r$-wise relation in $\Psi_{d,r,\Delta,s}$ which is satisfied for at least $\varepsilon |X_1| \cdot \cdots \cdot |X_r|$ of the $r$-tuples $(x_1,\dots, x_r) \in X_1 \times \cdots \times X_r$.
Then one can find, in expected time $O(\sum_{i=1}^r (|X_i| + 1/\varepsilon) \log(1/\varepsilon))$, subsets $Y_i \subseteq X_i$, each of cardinality $\Omega(\varepsilon^{d+1+\delta}|X_i|)$, so that $\psi(y_1,\dots, y_r) = 1$ holds for all $(y_1,\dots, y_r) \in Y_1 \times \cdots \times Y_r$.
\end{theorem}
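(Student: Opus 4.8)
This is an efficient $r$-partite strengthening of the polynomial semi-algebraic regularity lemma, and the plan is to prove it in two layers: first the purely geometric existence of a homogeneous box $Y_1\times\dots\times Y_r$ of the stated sizes, and then a randomized construction realizing it in near-linear expected time. For the existence part I would induct on the number of parts $r$. The case $r=1$ is vacuous (take $Y_1=X_1$). For the inductive step, single out the last part: for every $(r-1)$-tuple $u=(x_1,\dots,x_{r-1})\in X_1\times\dots\times X_{r-1}$ put $\sigma(u)=\{x\in\mathbb{R}^d:\psi(x_1,\dots,x_{r-1},x)=1\}$, a semi-algebraic set in $\mathbb{R}^d$ whose description complexity is bounded in terms of $(\Delta,s,r)$. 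I would then apply the cutting lemma for bounded-complexity semi-algebraic surfaces (Chazelle--Edelsbrunner--Guibas--Sharir; Agarwal--Matou\v{s}ek--Sharir; Koltun) to the $N:=|X_1|\cdots|X_{r-1}|$ boundary surfaces $\partial\sigma(u)$: for a parameter $t$ this yields a decomposition of $\mathbb{R}^d$ into $O(t^{d+\delta})$ bounded-complexity cells, each met by at most $N/t$ of the surfaces, where the $\delta$ slack is exactly what allows an efficient construction with only $t^{d+\delta}$ cells. Choose a cell $C$ capturing the largest share of the $\ge\varepsilon N|X_r|$ satisfying pairs $(u,x_r)$, set $Y_r$ to be $X_r\cap C$ (refined to a net inside $C$), and let $G=\{u:\sigma(u)\supseteq C\}$; all but $N/t$ of the tuples are ``decided'' (in or out) on $C$, so taking $t$ of order $1/\varepsilon$ makes $G$ a positive-density subset of $X_1\times\dots\times X_{r-1}$. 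Crucially, ``$\sigma(u)\supseteq C$'' is itself a bounded-complexity $(r-1)$-wise relation $\psi^{\ast}$ in $x_1,\dots,x_{r-1}$, so applying the induction hypothesis to $X_1,\dots,X_{r-1}$ with $\psi^{\ast}$ produces $Y_1,\dots,Y_{r-1}$ on which $\psi^{\ast}\equiv 1$, i.e.\ $\psi\equiv 1$ on all of $Y_1\times\dots\times Y_r$; each $X_i$ is touched by exactly one cutting round.

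The step I expect to be the main obstacle is the exponent bookkeeping: getting the per-part loss down to exactly $\varepsilon^{d+1+\delta}$, uniformly in $r$. A naive iteration degrades the density by a power of $\varepsilon$ at every round and yields an exponent growing with $r$. To prevent this I would be careful both in the choice of the cutting parameter, taking $t=\Theta(\varepsilon^{-1})$ with the hidden constant depending only on $d$ and $\delta$, and in the choice of the cell: I would pick $C$ to maximize the number of satisfying pairs with $x_r\in C$ (not the number of points of $X_r$ in $C$), after which a double-counting shows $|G|\ge c\,\varepsilon\,N$ with $c$ depending only on $r$, so the density is preserved across all $r$ rounds up to a constant factor. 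Each part then pays a single factor $\Theta(t^{-(d+\delta)})=\Theta(\varepsilon^{d+\delta})$ from the cell count and an extra $\Theta(\varepsilon)$ from passing to a net inside $C$, which multiplies to $\varepsilon^{d+1+\delta}$. Verifying that the $N/t$ ``bad'' surfaces crossing $C$ and the loss in the net extraction can both be absorbed into constants free of $\varepsilon$ --- and that the net inside $C$ can be taken large enough while still being homogeneous --- is where most of the technical work lies.

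For the running time $O(\sum_{i=1}^{r}(|X_i|+1/\varepsilon)\log(1/\varepsilon))$, the plan is never to materialize the full cutting, which already has $\Theta(\varepsilon^{-d-\delta})$ cells, far more than $1/\varepsilon$. Instead, in each of the $O(1)$ rounds I would (i) draw $O((1/\varepsilon)\log(1/\varepsilon))$ random $r$-tuples and $O((1/\varepsilon)\log(1/\varepsilon))$ random $(r-1)$-tuples and, via the Clarkson--Shor random-sampling framework, use them to define --- implicitly, as the cell of the full surface arrangement containing a sampled point of $X_r$ --- a bounded-complexity region $C$ that with constant probability is simultaneously heavy (captures an $\Omega(\varepsilon/t^{d+\delta})$ fraction of the satisfying pairs) and crossed by at most $N/t$ surfaces, repeating until success; (ii) form $Y_r$ by one scan of $X_r$ against $C$, costing $O(|X_i|)$ oracle evaluations; and (iii) recurse on the residual $(r-1)$-part instance. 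Since $r,d,\Delta,s$ are fixed, there are $O(1)$ rounds, every random sample has size $O((1/\varepsilon)\log(1/\varepsilon))$, and each $X_i$ is scanned a constant number of times, which gives the claimed expected bound; the fact that the guarantee is on expectation rather than worst case enters precisely through this randomized heavy-cell selection, consistent with the paper's remark about the source of the non-determinism. The most implementation-heavy point here is carrying out the heavy, low-crossing region selection using only $\widetilde O(1/\varepsilon)$ tuples and without ever constructing an arrangement of size $\mathrm{poly}(1/\varepsilon)$.
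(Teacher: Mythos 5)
The paper does not actually prove this statement: it is imported as a black box from Rubin~\cite{rubin24} (the authors only use it, and attribute the randomness in their own \cref{thm-timeComplexity} to it), so there is no in-paper proof to compare against. Judged on its own terms, your existence argument follows the standard cutting-based scheme behind the Fox--Pach--Suk density theorem and behind Rubin's result, and your exponent bookkeeping ($\varepsilon^{d+\delta}$ from the cell count times one more $\varepsilon$) has the right shape. One step is too loose as written: if you choose the cell $C$ maximizing the number of captured satisfying pairs without first refining the cutting so that every cell contains at most $O(|X_r|/t^{d+\delta})$ points of $X_r$, then the conclusion $|G|\ge c\varepsilon N$ does not follow, because the at most $N/t$ surfaces crossing $C$ can account for up to $(N/t)\,|X_r\cap C|$ of the captured pairs, which for $t=\Theta(1/\varepsilon)$ swamps the guarantee $\varepsilon N|X_r|/O(t^{d+\delta})$ unless $|X_r\cap C|$ is itself at most $O(|X_r|/t^{d+\delta})$. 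The fix is the usual equipartition refinement of the cutting, after which a global count shows that at least half of the satisfying pairs are ``containing'' pairs and averaging over cells yields both $|G|=\Omega(\varepsilon N)$ and $|X_r\cap C|=\Omega(\varepsilon^{d+1+\delta}|X_r|)$. You should also justify that ``$\sigma(u)\supseteq C$'' is a bounded-complexity relation in $u$; this needs quantifier elimination over the cell $C$ and is where the constants (though not the exponent of $\varepsilon$) blow up.

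The genuine gap is in the algorithmic half, which is the actual content of the theorem. Your plan to certify that a candidate cell is heavy by drawing $O((1/\varepsilon)\log(1/\varepsilon))$ random tuples cannot work: a good cell captures only an $\Omega(\varepsilon^{d+1+\delta})$ fraction of all $r$-tuples (equivalently, a $\Theta(\varepsilon^{d+\delta})$ fraction of the satisfying ones), so distinguishing a heavy cell from a light one by sampling requires $\Omega(\varepsilon^{-(d+\delta)})$ satisfying samples, i.e.\ $\Omega(\varepsilon^{-(d+1+\delta)})$ draws, far beyond both your sample budget and the stated time bound. Consequently the loop ``repeat until success'' has no affordable success test. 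This must be replaced by a one-sided argument: sample a satisfying tuple (expected $O(1/\varepsilon)$ trials), take the cell of the sampled cutting containing its last coordinate, and show by a Clarkson--Shor-type analysis that this cell is simultaneously low-crossing and sufficiently populated with constant probability --- without verifying heaviness online --- deferring any check to the very end, where the sizes of the produced $Y_i$ can be tested in $O(\sum_i|X_i|)$ time and the whole procedure restarted on failure. Until the cell-selection step is restructured along these lines, the claimed expected running time is not established.
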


The proof of the Same-Type Lemma by Fox, Pach, and Suk~\cite{foxPachSuk16} is constructive. We go through it step by step in order to estimate the running time of the algorithm it yields.
If $(p_1,\dots,p_n)$ is a sequence of $n \geq d+1$ points in $\mathbb{R}^d$ in general position, then the \emph{order-type} of $(p_1,\dots, p_n)$ is the mapping $\chi \colon \binom{P}{d+1} \to \{+1, -1\}$, which assigns to 
each $(d + 1)$-tuple of these points its orientation (positive or negative).
For integers $d \geq 2$ and $r \geq d+1$, let $X_1,\dots, X_r$ be pairwise disjoint sets in~$\mathbb{R}^d$, each containing $n$ points, whose union is in general position.
It is known that the number of different
order-types of $r$-element point sets in $\mathbb{R}^d$ is at most $r^{O(d^2r)}$~\cite{goodmanPollack91,goodmanPollack93}.
Thus, by the pigeonhole principle, at least
$r^{-O(d^2r)}|X_1|\cdots|X_r|$ $r$-tuples $(x_1,\dots, x_r) \in (X_1,\dots, X_r)$ have the same order-type $\chi$.
We define the relation $E \subset X_1 \times \dots \times X_r$, where $(x_1,\dots, x_r) \in E$ if and only if $(x_1,\dots, x_r)$ has the order-type $\chi$.
Then it can be shown~\cite{foxPachSuk16} that the description complexity of $E$ is $\left(\binom{r}{d+1},1\right)$.
Therefore, we can apply Theorem~\ref{thm-semialgebraicRegularityPartite} to the $r$-partite semi-algebraic hypergraph $(P, E)$ with $d$, $r$, $\Delta = \binom{r}{d+1}$, $s=1$, $\delta = 1$, and $\varepsilon = r^{-O(d^2 r)}$ 
to obtain subsets $Y_i \subseteq X_i$, each of cardinality $c(d,r) \cdot |X_i|$, with the same-type property, where $c(d,r) \in r^{-O(d^3 r)}$, in time 
\[
O\left(r\left(n+r^{O(d^2r)}\right) \log{\left(r^{O(d^2r)}\right)}\right) \leq O((n+r^{O(d^2r)})d^2r^2\log{r}).
\]
Thus, we obtain the following estimate on the time complexity of the Same-Type Lemma.

\begin{corollary}
\label{cor-sameTypeTime}
For all integers $d \geq 2$ and $r \geq d+1$, the sets $Y_1,\dots,Y_r$ from Theorem~\ref{thm-sameTypeLemma} 
can be found in expected time $O((n+r^{O(d^2r)})d^2r\log{r})$ if $|X_1|=\dots=|X_r|=n$.\qed
\end{corollary}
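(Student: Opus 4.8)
The plan is to turn the constructive proof of the Same-Type lemma of Fox, Pach, and Suk~\cite{foxPachSuk16} into an algorithm whose engine is Rubin's algorithmic partite semi-algebraic regularity lemma (Theorem~\ref{thm-semialgebraicRegularityPartite}), and then to read off its running time, essentially as outlined in the paragraph preceding the statement. First, I would fix the target density: by the Goodman--Pollack bound the number of order types of $r$-point configurations in $\mathbb{R}^d$ is at most $r^{O(d^{2}r)}$, so setting $\varepsilon := r^{-O(d^{2}r)}$ the pigeonhole principle produces an order type $\chi$ realized by at least $\varepsilon\,|X_1|\cdots|X_r|$ of the tuples $(x_1,\dots,x_r)\in X_1\times\cdots\times X_r$.

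The second step is to make the choice of $\chi$ effective, since Theorem~\ref{thm-semialgebraicRegularityPartite} requires an explicitly given relation together with a lower bound on the fraction of tuples it accepts. As only \emph{some} order type of density $\Omega(\varepsilon)$ is needed, I would draw $O(\varepsilon^{-1}\log\varepsilon^{-1})$ independent uniformly random tuples from $X_1\times\cdots\times X_r$, compute the order type of each (which amounts to evaluating the signs of the $\binom{r}{d+1}$ determinants of the corresponding $(d+1)\times(d+1)$ coordinate matrices), and keep the order type that occurs most often in the sample; a Chernoff bound combined with a union bound over the at most $\varepsilon^{-1}$ realized order types shows that the selected $\chi$ has density $\Omega(\varepsilon)$ with high probability. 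The cost of this step is $r^{O(d^{2}r)}$ and is thus absorbed into the additive $r^{O(d^{2}r)}$ term of the claimed bound; this is the source of the randomization, on top of the one already inherent in Theorem~\ref{thm-semialgebraicRegularityPartite}.

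Third, I would invoke the observation of Fox, Pach, and Suk~\cite{foxPachSuk16} that the relation $E\subseteq X_1\times\cdots\times X_r$ given by ``$(x_1,\dots,x_r)$ has order type $\chi$'' is an $r$-wise semi-algebraic relation of description complexity $(\binom{r}{d+1},1)$, so in particular its complexity parameters depend only on $d$ and $r$. Finally, I would apply Theorem~\ref{thm-semialgebraicRegularityPartite} to the $r$-partite semi-algebraic hypergraph $(X_1\cup\cdots\cup X_r, E)$ with $\Delta=\binom{r}{d+1}$, $s=1$, $\delta=1$, and density parameter $\varepsilon=r^{-O(d^{2}r)}$. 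This produces, in expected time $O\bigl(\sum_{i=1}^{r}(|X_i|+\varepsilon^{-1})\log\varepsilon^{-1}\bigr)$, subsets $Y_i\subseteq X_i$ with $|Y_i|=\Omega(\varepsilon^{d+2}|X_i|)=r^{-O(d^{3}r)}|X_i|$ such that $E(y_1,\dots,y_r)=1$ for all $(y_1,\dots,y_r)\in Y_1\times\cdots\times Y_r$; by the definition of $E$ this is exactly the same-type property, and the size bound matches the constant $c(d,r)$ of the Same-Type lemma. Substituting $\varepsilon^{-1}=r^{O(d^{2}r)}$ and $\log\varepsilon^{-1}=O(d^{2}r\log r)$ and simplifying then yields the expected running time claimed in the corollary.

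Since the constructive Fox--Pach--Suk argument and Theorem~\ref{thm-semialgebraicRegularityPartite} are used as black boxes, the only genuinely delicate point is the bookkeeping of constants: one has to make explicit how the implied constants in Theorem~\ref{thm-semialgebraicRegularityPartite} depend on the parameters $d$, $r$, $\Delta$, $s$ (which are treated there as fixed but here grow with $r$), and check that the auxiliary step of extracting a popular order type indeed does not dominate the overall running time. I expect this bookkeeping to be the main, albeit modest, obstacle; everything else is a direct substitution into the cited statements.
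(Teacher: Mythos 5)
Your proposal follows the same route the paper sketches in the paragraph before the corollary: bound the number of order types via Goodman--Pollack, pigeonhole to a popular order type $\chi$, encode ``has order type $\chi$'' as an $r$-wise semi-algebraic relation of description complexity $\left(\binom{r}{d+1},1\right)$, and feed it into Rubin's algorithmic partite regularity lemma (Theorem~\ref{thm-semialgebraicRegularityPartite}) with $\delta=1$ and $\varepsilon=r^{-O(d^{2}r)}$. The one thing you add---sampling $O(\varepsilon^{-1}\log\varepsilon^{-1})$ random tuples and taking the most frequent order type, with a Chernoff-plus-union-bound argument over the at most $\varepsilon^{-1}$ order types---is a sensible and genuinely needed refinement, since the paper's sketch passes over how $\chi$ is identified algorithmically, and your cost analysis correctly shows this step is absorbed in the additive $r^{O(d^{2}r)}$ term of the bound.
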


\theoremTimeComplexity*

\begin{proof} 
Let $P$ be a set of at least $\Theta(k m)$ points in the plane in general position.
In the proof of Theorem~\ref{thm-noncrVsCr}, we apply the Same-Type Lemma (Theorem~\ref{thm-sameTypeLemma}) with $d=2$, $r=6$, $n=\Theta(m)$ $O(k)$-times to obtain a convex bundle $A_1,\dots,A_k,B_1,\dots,B_k$ of size $2k$ and width $m$ in $P$.
Since $k = \Theta(|P|/m)$, it follows from Corollary~\ref{cor-sameTypeTime} applied with $d=2$, $r=6$, and $n=|P|$ that this can be done in time
\[O\left(\frac{|P|}{m} \cdot m\right) = O(|P|),\]
which is the estimate on the time complexity in Theorem~\ref{thm-noncrVsCr}.

To prove Theorem~\ref{thm-noncrVsCrStronger}, we also apply Theorem~\ref{thm-natanBipartite} to each set $A_i \cup B_i$ with $1 \leq i \leq k$.
The proof of Theorem~\ref{thm-natanBipartite} is an adaptation of the proof of Theorem~\ref{thm-natan} and has asymptotically the same time complexity.
As mentioned by Pach, Rubin, and Tardos~\cite{prt21}, their proof yields an algorithm whose running time is $n^{2+O((\log{n})^{-1/3})}$ on sets of $n$ points.
Since $k = \Theta(|P|/m)$ and $|A_i \cup B_i| \in O(m)$, it follows from this and the previous estimate that the total running time is
\[O(|P| + (|P|/m) \cdot m^{2+O((\log{m})^{-1/3})}) = O(|P| \cdot m^{1+O((\log{m})^{-1/3})}),\]
which completes the proof.
\end{proof}

\section{Discussion and open problems}\label{sec-openProblems}

As the main result in this work, we have obtained a Ramsey-type result that gives a lower bound on the size of a maximum crossing family in a set $P$ of $n$ points in general position in the plane in dependence of the size $m$ of a maximum non-crossing family in $P$. The resulting bound of $n/2^{O(\sqrt{\log{m}})}$ for the crossing family improves the currently best known lower bound of $n/2^{O(\sqrt{\log{n}})}$ whenever $m\in o(n)$. Our proofs are constructive and yield an algorithm that finds a crossing family of desired size in  polynomial expected time. 

The result 
also constitutes a relevant step towards answering \cref{prob-GG}. Unfortunately, the obtained bounds are not strong enough to completely resolve Problem~\ref{prob-GG}, which seems difficult.
Thus, the first natural open problem is to improve the currently best lower bound $k \in n/2^{O(\sqrt{\log{m}})}$ on the size $k$ of the largest crossing family determined by a set of $n$ points in the plane in general position that does not contain a non-crossing family of size $m$.
In particular, it is interesting to find out for how large values of $m$ we can get that $k \in \Omega(n)$. 
It follows from both Corollary~\ref{cor-noncrVsCr} and Theorem~\ref{thm-noncrVsCrStronger} that if $m \in O(1)$, then $k \in \Omega(n)$.
Is it possible to obtain a linear lower bound on $k$ even if $m \in \omega(1)$?

\begin{problem}
\label{prob-linear}
Does there exist a constant $c>0$ and a function $f \colon \mathbb{N} \to \mathbb{N}$ with $\lim_{n \to \infty} f(n)=\infty$ such that every set of $n$ points in the plane in general position contains either a crossing family of size at least $cn$ or non-crossing family of size at least $f(n)$?
\end{problem}

Bose, Hurtado, Rivera-Campo, and Wood showed that
the edge set of every complete geometric graph with a spoke set of size $k$ can be partitioned into $n - k$ plane trees~\cite{bhrw06}. 
Motivated by this result, Schnider~\cite{workshopProblem2} posed the following variant of Problem~\ref{prob-GG}.

\begin{problem}[\cite{workshopProblem2}]
\label{prob-GG2}
Is there a constant $c>0$ such that every set of $n$ points in the plane in general position contains a spoke set of size $cn$ or a non-crossing family of size $cn$?
\end{problem}

Since every crossing family is a spoke set but not every spoke set is a crossing family, Problem~\ref{prob-GG2} is a weaker variant of Problem~\ref{prob-GG}.
However, already an affirmative solution of Problem~\ref{prob-GG2} would imply that the edge set of any complete geometric graph can be partitioned into $cn$ plane subgraphs for some constant $c<1$.

\medskip
Finally, the well-known \cref{prob-crossingfamily}, which asks whether or not $T(n) \in \Theta(n)$~\cite{braMosPach05}, still remains open.

\bibliography{bibliographyGD}

\begin{thebibliography}{10}

\bibitem{Ackerman09}
E.~Ackerman.
\newblock On the maximum number of edges in topological graphs with no four pairwise crossing edges.
\newblock {\em Discrete Comput. Geom.}, 41(3):365--375, 2009.
\newblock \href {https://doi.org/10.1007/s00454-009-9143-9} {\path{doi:10.1007/s00454-009-9143-9}}.

\bibitem{Ackerman15}
E.~Ackerman.
\newblock On topological graphs with at most four crossings per edge.
\newblock {\em Comput. Geom.}, 85:101574, 31, 2019.
\newblock \href {https://doi.org/10.1016/j.comgeo.2019.101574} {\path{doi:10.1016/j.comgeo.2019.101574}}.

\bibitem{aiKyScgVoVa22}
O.~Aichholzer, J.~Kyn{\v c}l, M.~Scheucher, B.~Vogtenhuber, and P.~Valtr.
\newblock On crossing-families in planar point sets.
\newblock {\em Comput. Geom.}, 107:Paper No. 101899, 8, 2022.
\newblock \href {https://doi.org/10.1016/j.comgeo.2022.101899} {\path{doi:10.1016/j.comgeo.2022.101899}}.

\bibitem{aoorsstv22}
O.~Aichholzer, J.~Obenaus, J.~Orthaber, R.~Paul, P.~Schnider, R.~Steiner, T.~Taubner, and B.~Vogtenhuber.
\newblock Edge partitions of complete geometric graphs.
\newblock In {\em 38th {I}nternational {S}ymposium on {C}omputational {G}eometry}, volume 224 of {\em LIPIcs. Leibniz Int. Proc. Inform.}, pages Art. No. 6, 16. Schloss Dagstuhl. Leibniz-Zent. Inform., Wadern, 2022.
\newblock \href {https://doi.org/10.4230/lipics.socg.2022.6} {\path{doi:10.4230/lipics.socg.2022.6}}.

\bibitem{alPachPinRado05}
N.~Alon, J.~Pach, R.~Pinchasi, R.~Radoi{\v c}i{\'c}, and M.~Sharir.
\newblock Crossing patterns of semi-algebraic sets.
\newblock {\em J. Combin. Theory Ser. A}, 111(2):310--326, 2005.
\newblock \href {https://doi.org/10.1016/j.jcta.2004.12.008} {\path{doi:10.1016/j.jcta.2004.12.008}}.

\bibitem{Angelini2018}
P.~Angelini, M.~A. Bekos, M.~Kaufmann, M.~Pfister, and Torsten Ueckerdt.
\newblock Beyond-planarity: {T}ur\'{a}n-type results for non-planar bipartite graphs.
\newblock In {\em 29th {I}nternational {S}ymposium on {A}lgorithms and {C}omputation}, volume 123 of {\em LIPIcs. Leibniz Int. Proc. Inform.}, pages Art. No. 28, 13. Schloss Dagstuhl. Leibniz-Zent. Inform., Wadern, 2018.

\bibitem{AngeliniDLFS2024}
P.~Angelini, G.~Da~Lozzo, H.~F\"{o}rster, and T.~Schneck.
\newblock 2-layer {$k$}-planar graphs density, crossing lemma, relationships and pathwidth.
\newblock {\em Comput. J.}, 67(3):1005--1016, 2024.
\newblock \href {https://doi.org/10.1093/comjnl/bxad038} {\path{doi:10.1093/comjnl/bxad038}}.

\bibitem{AngeliniDLFS2024GD}
Patrizio Angelini, Giordano Da~Lozzo, Henry F\"{o}rster, and Thomas Schneck.
\newblock 2-layer k-planar graphs: Density, crossing lemma, relationships, and pathwidth.
\newblock In {\em Graph Drawing and Network Visualization: 28th International Symposium, GD 2020, Vancouver, BC, Canada, September 16–18, 2020, Revised Selected Papers}, page 403–419, Berlin, Heidelberg, 2020. Springer-Verlag.
\newblock \href {https://doi.org/10.1007/978-3-030-68766-3_32} {\path{doi:10.1007/978-3-030-68766-3_32}}.

\bibitem{aegkkps94}
B.~Aronov, P.~Erd{\H o}s, W.~Goddard, D.~J. Kleitman, M.~Klugerman, J.~Pach, and L.~J. Schulman.
\newblock Crossing families.
\newblock {\em Combinatorica}, 14(2):127--134, 1994.
\newblock \href {https://doi.org/10.1007/BF01215345} {\path{doi:10.1007/BF01215345}}.

\bibitem{barVal98}
I.~B\'ar\'any and P.~Valtr.
\newblock A positive fraction {E}rd{\H o}s--{S}zekeres theorem.
\newblock {\em Discrete Comput. Geom.}, 19(3):335--342, 1998.
\newblock Dedicated to the memory of Paul Erd{\H o}s.
\newblock \href {https://doi.org/10.1007/PL00009350} {\path{doi:10.1007/PL00009350}}.

\bibitem{ahmAlf20}
A.~Biniaz and A.~Garc\'ia.
\newblock Packing plane spanning trees into a point set.
\newblock {\em Comput. Geom.}, 90:101653, 5, 2020.
\newblock \href {https://doi.org/10.1016/j.comgeo.2020.101653} {\path{doi:10.1016/j.comgeo.2020.101653}}.

\bibitem{bhrw06}
P.~Bose, F.~Hurtado, E.~Rivera-Campo, and D.~R. Wood.
\newblock Partitions of complete geometric graphs into plane trees.
\newblock {\em Comput. Geom.}, 34(2):116--125, 2006.
\newblock \href {https://doi.org/10.1016/j.comgeo.2005.08.006} {\path{doi:10.1016/j.comgeo.2005.08.006}}.

\bibitem{Brandenburg2016}
F.-J. Brandenburg.
\newblock A simple quasi-planar drawing of ${K}_{10}$, 2016.
\newblock URL: \url{https://api.semanticscholar.org/CorpusID:125356339}.

\bibitem{braMosPach05}
P.~Brass, W.~Moser, and J.~Pach.
\newblock {\em Research problems in discrete geometry}.
\newblock Springer, New York, 2005.

\bibitem{Bukh2024}
B.~Bukh and A.~Vasileuski.
\newblock New bounds for the same-type lemma.
\newblock {\em The Electronic Journal of Combinatorics}, 31(2), June 2024.
\newblock URL: \url{http://dx.doi.org/10.37236/12414}, \href {https://doi.org/10.37236/12414} {\path{doi:10.37236/12414}}.

\bibitem{Capoyleas92}
V.~Capoyleas and J.~Pach.
\newblock A {T}ur\'{a}n-type theorem on chords of a convex polygon.
\newblock {\em J. Combin. Theory Ser. B}, 56(1):9--15, 1992.
\newblock \href {https://doi.org/10.1016/0095-8956(92)90003-G} {\path{doi:10.1016/0095-8956(92)90003-G}}.

\bibitem{dumiPach24GD}
Adrian Dumitrescu and J\'{a}nos Pach.
\newblock {Partitioning Complete Geometric Graphs on Dense Point Sets into Plane Subgraphs}.
\newblock In Stefan Felsner and Karsten Klein, editors, {\em 32nd International Symposium on Graph Drawing and Network Visualization (GD 2024)}, volume 320 of {\em Leibniz International Proceedings in Informatics (LIPIcs)}, pages 9:1--9:10, Dagstuhl, Germany, 2024. Schloss Dagstuhl -- Leibniz-Zentrum f{\"u}r Informatik.
\newblock URL: \url{https://drops.dagstuhl.de/entities/document/10.4230/LIPIcs.GD.2024.9}, \href {https://doi.org/10.4230/LIPIcs.GD.2024.9} {\path{doi:10.4230/LIPIcs.GD.2024.9}}.

\bibitem{foxPachSuk16}
J.~Fox, J.~Pach, and A.~Suk.
\newblock A polynomial regularity lemma for semialgebraic hypergraphs and its applications in geometry and property testing.
\newblock {\em SIAM J. Comput.}, 45(6):2199--2223, 2016.
\newblock \href {https://doi.org/10.1137/15M1007355} {\path{doi:10.1137/15M1007355}}.

\bibitem{fulSuk13}
R.~Fulek and A.~Suk.
\newblock On disjoint crossing families in geometric graphs.
\newblock In {\em Thirty essays on geometric graph theory}, pages 289--302. Springer, New York, 2013.
\newblock \href {https://doi.org/10.1007/978-1-4614-0110-0\_15} {\path{doi:10.1007/978-1-4614-0110-0\_15}}.

\bibitem{goodmanPollack91}
J.~E. Goodman and R.~Pollack.
\newblock The complexity of point configurations.
\newblock {\em Discrete Appl. Math.}, 31(2):167--180, 1991.
\newblock First Canadian Conference on Computational Geometry (Montreal, PQ, 1989).
\newblock \href {https://doi.org/10.1016/0166-218X(91)90068-8} {\path{doi:10.1016/0166-218X(91)90068-8}}.

\bibitem{goodmanPollack93}
J.~E. Goodman and R.~Pollack.
\newblock Allowable sequences and order types in discrete and computational geometry.
\newblock In {\em New trends in discrete and computational geometry}, volume~10 of {\em Algorithms Combin.}, pages 103--134. Springer, Berlin, 1993.
\newblock \href {https://doi.org/10.1007/978-3-642-58043-7\_6} {\path{doi:10.1007/978-3-642-58043-7\_6}}.

\bibitem{Hoffmann}
M.~Hoffmann and C.~D. T\'{o}th.
\newblock Two-planar graphs are quasiplanar.
\newblock In {\em 42nd {I}nternational {S}ymposium on {M}athematical {F}oundations of {C}omputer {S}cience}, volume~83 of {\em LIPIcs. Leibniz Int. Proc. Inform.}, pages Art. No. 47, 14. Schloss Dagstuhl. Leibniz-Zent. Inform., Wadern, 2017.

\bibitem{Felix23GD}
Michael Kaufmann, Boris Klemz, Kristin Knorr, Meghana M.~Reddy, Felix Schr\"{o}der, and Torsten Ueckerdt.
\newblock {The Density Formula: One Lemma to Bound Them All}.
\newblock In Stefan Felsner and Karsten Klein, editors, {\em 32nd International Symposium on Graph Drawing and Network Visualization (GD 2024)}, volume 320 of {\em Leibniz International Proceedings in Informatics (LIPIcs)}, pages 7:1--7:17, Dagstuhl, Germany, 2024. Schloss Dagstuhl -- Leibniz-Zentrum f{\"u}r Informatik.
\newblock URL: \url{https://drops.dagstuhl.de/entities/document/10.4230/LIPIcs.GD.2024.7}, \href {https://doi.org/10.4230/LIPIcs.GD.2024.7} {\path{doi:10.4230/LIPIcs.GD.2024.7}}.

\bibitem{laraRub19}
D.~Lara and C.~Rubio-Montiel.
\newblock On crossing families of complete geometric graphs.
\newblock {\em Acta Math. Hungar.}, 157(2):301--311, 2019.
\newblock \href {https://doi.org/10.1007/s10474-018-0880-1} {\path{doi:10.1007/s10474-018-0880-1}}.

\bibitem{Matousek91socg}
Ji\v{r}\'{\i} Matou\v{s}ek.
\newblock Efficient partition trees.
\newblock In {\em Proceedings of the Seventh Annual Symposium on Computational Geometry}, SCG '91, page 1–9, New York, NY, USA, 1991. Association for Computing Machinery.
\newblock \href {https://doi.org/10.1145/109648.109649} {\path{doi:10.1145/109648.109649}}.

\bibitem{mat02}
Ji\v{r}\'{i} Matou\v{s}ek.
\newblock Efficient partition trees.
\newblock {\em Discrete \&; Computational Geometry}, 8(3):315–334, August 1992.
\newblock URL: \url{http://dx.doi.org/10.1007/BF02293051}, \href {https://doi.org/10.1007/bf02293051} {\path{doi:10.1007/bf02293051}}.

\bibitem{workshopProblem1}
J.~Orthaber.
\newblock Ramsey on crossing families.
\newblock Presented at the 19th European Research Week on Geometric Graphs, August 2024.
\newblock Trier, Germany.

\bibitem{obeOrt21}
J.~Orthaber and J.~Obenaus.
\newblock Edge partitions of complete geometric graphs (part 1).
\newblock \href{https://arxiv.org/abs/2108.05159}{https://arxiv.org/abs/2108.05159}, 2021.

\bibitem{prt21}
J.~Pach, N.~Rubin, and G.~Tardos.
\newblock Planar point sets determine many pairwise crossing segments.
\newblock {\em Adv. Math.}, 386:Paper No. 107779, 21, 2021.
\newblock \href {https://doi.org/10.1016/j.aim.2021.107779} {\path{doi:10.1016/j.aim.2021.107779}}.

\bibitem{pss23}
J.~Pach, M.~Saghafian, and P.~Schnider.
\newblock Decomposition of geometric graphs into star-forests.
\newblock In {\em Graph drawing and network visualization. {P}art {I}}, volume 14465 of {\em Lecture Notes in Comput. Sci.}, pages 339--346. Springer, Cham, 2023.
\newblock \href {https://doi.org/10.1007/978-3-031-49272-3\_23} {\path{doi:10.1007/978-3-031-49272-3\_23}}.

\bibitem{rubin24}
N.~Rubin.
\newblock An efficient regularity lemma for semi-algebraic hypergraphs.
\newblock \href{https://arxiv.org/abs/2407.15518}{https://arxiv.org/abs/2407.15518}, 2024.

\bibitem{workshopProblem2}
P.~Schnider.
\newblock Edge colorings of complete geometric graphs.
\newblock Presented at the 19th European Research Week on Geometric Graphs, August 2024.
\newblock Trier, Germany.

\bibitem{valtr96}
P.~Valtr.
\newblock Lines, line-point incidences and crossing families in dense sets.
\newblock {\em Combinatorica}, 16(2):269--294, 1996.
\newblock \href {https://doi.org/10.1007/BF01844852} {\path{doi:10.1007/BF01844852}}.

\end{thebibliography}

\appendix

\section{Proof of Proposition~\ref{prop-spokevscross}}

    Let $S$ be a regular $k$-gon with vertices $v_1,\dots,v_k$. We first replace every vertex $v_i$ of $S$ by a segment $s_{v_i}$ of length $2\epsilon$ (where $\epsilon>0$ is constant) centered at $v_i$ and perpendicular to the bisector of the angle of $S$ at $v_i$. 
    Then, we consider $S'$ to be the set of $2k$ points formed by the endpoints of $s_v$. Note that the innermost cell of the complete geometric graph on $S'$ is again a regular $k$-gon. We call this cell $C$. 
    Now, let $\mathcal{L} = \{\ell_1,\ell_2,\dots, \ell_k\}$ be the set of lines where each $\ell_i$ is the extension of the angle-bisectors of $S$ at $v_i$. Clearly, $\mathcal{L}$ is a spoke set of size $k$ on $S'$. Now, for each odd $i\in [\frac{k}{2}-1]$, add to $\mathcal{L}$ the line bisecting the angle between $\ell_i$ and $\ell_{i+\lceil\frac{k}{2}\rceil}$, call this new set $\mathcal{L}'$. We finally obtain $P$ by adding to each empty unbounded cell $E$ of $\mathcal{A}(\mathcal{L})$ a single point inside $E\cap C$; see Figure \ref{fig-spokesets}. 
    
    Now, by construction, $|P|=3k-1$ and $\mathcal{L}'$ is a spoke set of size $\lfloor 1.5k \rfloor$ on $P$. Also, no line segment between two points in $P$ which lie outside $C$ crosses any of the segments between two points in $P \cap C$. Now, let $\mathcal{F}$ be a crossing family in $P$.  If there is a segment $s\in \mathcal{F}$ which has both endpoints in $P\setminus C$ then the line through $s$ divides $P$ into two parts, one of which contains at most $k-1$ points of $P$. Therefore $|\mathcal{F}|\le k$, so we can assume this is not the case. Otherwise, each segment of $\mathcal{F}$ has one endpoint in $P\cap C$, but $|P\cap C| =k$, so again $|\mathcal{F}|\le k$.

\begin{figure}[htbp]
    \centering
    \begin{subfigure}[b]{0.4\textwidth}
        \includegraphics[width=\textwidth,page=1]{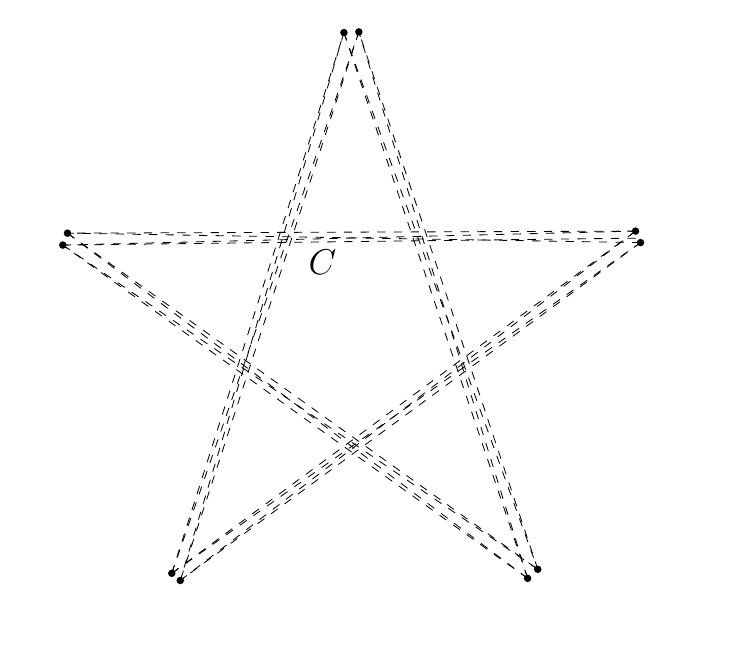}
        
    \end{subfigure}
    \hfill
    \begin{subfigure}[b]{0.4\textwidth}
        \includegraphics[width=\textwidth,page=2]{spokesetsconst.pdf}
      
    \end{subfigure}
    \hfill
    \begin{subfigure}[b]{0.4\textwidth}
        \includegraphics[width=\textwidth, page=3]{spokesetsconst.pdf}
       
    \end{subfigure}
    \caption{Construction of the set $P$ from Proposition \ref{prop-spokevscross}.}
    \label{fig-spokesets}
\end{figure}

\end{document}